\theoremstyle{definition}
\newtheorem{example}{Example}[section]
\newtheorem{question}{Question}[section]
\newtheorem{definition}[example]{Definition}
\newtheorem{theorem}[example]{Theorem}
\newtheorem{corollary}[example]{Corollary}
\newtheorem{lemma}[example]{Lemma}
\newtheorem{proposition}[example]{Proposition}
\newtheorem{remark}[example]{Remark}
\newenvironment{customthm}[1]
  {\innercustomthm}
  {\endinnercustomthm}
\newcommand\dyckpath[5]{
  \begin{scope}[local bounding box=#4]
    \fill[white]  (#1) rectangle +(#2,#2);
    \fill[red!25!white] (#1) foreach \dir in {#3}{-- ++(\dir*90:1)} |- (#1);
    \path[fill] (#1) foreach \i [count=\j] in {0,...,#5}{ +(\i,0) node[anchor=north]{\j} \ifnum\i>#2 circle (1pt) \fi};
    \draw[help lines] (#1) grid +(#2,#2);
    \draw[line width=2pt] (#1) foreach \dir in {#3}{ -- ++(\dir*90:1)};
  \end{scope}
}
\newcommand{\lcm}{\mathrm{lcm}}
\newcommand{\arith}[1]{\textbf{Arith(}#1\textbf{)}}
\newcommand{\sarith}[1]{\textbf{SArith(}#1\textbf{)}}
\newcommand{\diag}{\text{diag}}
\newcommand{\N}{\mathbb{N}}
\newcommand{\C}{\mathbb{C}}
\newcommand{\CT}[2]{\mathrm{CT}(#1,#2)}
\title{Arithmetical Structures on Coconut Trees}
\author{Alexander Diaz-Lopez}\thanks{A. Diaz-Lopez was supported by NSF DMS-2211379.}
\author{Brian Ha}
\author{Pamela E. Harris}\thanks{P.~E.~Harris was supported through a Karen Uhlenbeck EDGE Fellowship.}
\author{Jonathan Rogers}
\author{Theo Koss}
\author{Dorian Smith}
\address[A.~Diaz-Lopez]{Villanova University, Department of Mathematics and Statistics, Villanova, PA 19085}
\email{\textcolor{blue}{\href{mailto:alexander.diaz-lopez@villanova.edu}{alexander.diaz-lopez@villanova.edu}}}
\address[B.~Ha]{University of Colorado, Boulder, Department of Applied Mathematics, Boulder, Colorado, 80309}
\email[B.~Ha]{\textcolor{blue}{\href{mailto:brian.ha@colorado.edu}{brian.ha@colorado.edu}}}
\address[P.~E.~Harris, T.~Koss]{University of Wisconsin, Milwaukee
Department of Mathematical Sciences,
Milwaukee, WI 53227}
\email[P.~E.~Harris, T.~Koss]{\textcolor{blue}{\href{mailto:peharris@uwm.edu}{peharris@uwm.edu}}, \textcolor{blue}{\href{mailto:takoss@uwm.edu}{takoss@uwm.edu}}}
\address[J.~Rogers]{Williams College,
Department of Mathematics and Statistics,
Williamstown, MA 01267}
\email[J.~Rogers]{\textcolor{blue}{\href{mailto:jsr7@williams.edu}{jsr7@williams.edu}}}
\address[D.~Smith]{University of Minnesota Twin Cities,
Department of Mathematics and Statistics,
Minneapolis, MN 55455}
\email[D.~Smith]{\textcolor{blue}{\href{mailto:smi01055@umn.edu}{smi01055@umn.edu}}}
\date{\today}
\begin{document}

\begin{abstract}
If G is a finite connected graph, then an arithmetical structure on $G$ is a pair of vectors $(\textbf{d}, \textbf{r})$ with positive integer entries such that $(\diag(\textbf{d}) - A)\cdot \textbf{r} = \textbf{0}$, where $A$ is the adjacency matrix of $G$ and the entries of \textbf{r} have no common factor other than $1$. In this paper, we generalize the result of Archer, Bishop, Diaz-Lopez, Garc\'ia Puente, Glass, and Louwsma on enumerating arithmetical structures on bidents (also called coconut tree graphs $\CT{p}{2}$) to all coconut tree graphs $\CT{p}{s}$ which consists of a path on $p>0$ vertices to which we append $s>0$ leaves to the right most vertex on the path. We also give a characterization of smooth arithmetical structures on coconut trees when given number assignments to the leaf nodes.
\end{abstract}

\maketitle
\section{Introduction}

An arithmetical structure on a graph is an assignment of weights to the vertices of the graph with positive integers satisfying: the weight at a vertex divides the sum of its neighbors' weights, and the weights do not share any common factor other than $1$. Arithmetical structures arose out of the study of the intersection of degenerating curves in algebraic geometry. Lorenzini  established that finite simple graphs have finitely many arithmetical structures~\cite[Lemma~1.6]{lorenzini_1989}. 

In light of Lorenzini's result, it is of interest to enumerate arithmetical structures on families of graphs. Throughout, we let $\arith{G}$  denote the set of arithmetical structures on a graph $G$ and $|\arith{G}|$ denotes its cardinality.
In \cite{braun_2018}, Braun, Corrales, Corry, Garc\'ia Puente, Glass, Kaplan, Martin, Musiker, and Valencia established if $\mathcal P_{n+1}$ is the path graph on $n+1$ vertices, then $|\arith{\mathcal P_{n+1}}|=C_{n}=\frac{1}{n+1}\binom{2n}{n}$, the $n$-th Catalan number \cite[\href{https://oeis.org/A000108}{A000108}]{OEIS}, 
and if $\mathcal{C}_n$ is the cycle graph on $n$ vertices, then $|\arith{\mathcal{C}_n}| =\binom{2n - 1}{n-1}$ $ = (2n-1)C_{n-1}$. Some partial enumeration results are known for bidents~\cite{archer_2020}, paths with a double edge~\cite{glass20}, and $E_n$ graphs~\cite{vetter21}. For complete graphs, arithmetical structures are in bijection with Egyptian fractions summing to 1 \cite[A002967]{OEIS}. In \cite{keyes21}, Keyes and Reiter provide a (very large) upper bound on the number of arithmetical structures on a graph based on the number of edges of the graph.

\begin{figure}[ht]
    \centering
    \begin{tabular}{cc}
\begin{subfigure}[b]{.35\textwidth}
\begin{tikzpicture}
    \node[]at (2.52,0){$\cdots$};
    \tikzstyle{every node}=[draw,circle,fill=white,minimum size=4pt,inner sep=2pt]
    \node[](v1) at (0,0){};
    \node[](v2) at (1,0){};
    \node[](v3) at (2,0){};
    \node[](v4) at (3,0){};
    \node[](v5) at (4,0){};
    \node[](v6) at (5,.75){};
    \node[](v7) at (5,-.75){};
    \draw(v1)--(v2)--(v3);
    \draw (v4)--(v5)--(v6);
    \draw(v5)--(v7);
        \def\C{(5.75,1)} 
     \def\D{(5.75,-1)} 
     \def\E{(0,-.75)} 
     \def\F{(4,-.75)} 
\draw[decorate,decoration={brace,mirror}] \E -- \F;

  \tikzstyle{every node}=[]
    
   \coordinate (B) at (2,-1);
 \node[yshift=-.25cm] at (B) {$p$ nodes};
    \end{tikzpicture}
         \caption{$\CT{p}{2}$}
        \label{fig:CTp2}
     \end{subfigure}
    &
    \begin{subfigure}[b]{.3\textwidth}
         % \resizebox{2in}{!}{
    \begin{tikzpicture}
    \node at (5.2,.1){$\vdots$};
    \node[]at (2.52,0){$\cdots$};
    \tikzstyle{every node}=[draw,circle,fill=white,minimum size=4pt,inner sep=2pt]
    \node[](v1) at (0,0){};
    \node[](v2) at (1,0){};
    \node[](v3) at (2,0){};
    \node[](v4) at (3,0){};
    \node[](v5) at (4,0){};
    \node[](v6) at (5,.75){};
    \node[](v7) at (5,-.75){};
    \node[](v8) at (5.2,-.5){};
    \node[](v9) at (5.2,.5){};
    
    \draw(v1)--(v2)--(v3);
    
    \draw (v4)--(v5)--(v6);
    \draw(v9)--(v5)--(v7);
    \draw(v5)--(v8);
     \def\C{(5.75,1)} 
     \def\D{(5.75,-1)} 
     \def\E{(0,-.75)} 
     \def\F{(4,-.75)} 
  \draw[decorate,decoration={brace,mirror}] \D -- \C;
\draw[decorate,decoration={brace,mirror}] \E -- \F;

  \tikzstyle{every node}=[]
    
  \coordinate (A) at (6,0);
   \coordinate (B) at (2,-1);
 \draw[] (A) circle (0);
 \node[xshift=.65cm] at (A) {$s$ nodes};
 \node[yshift=-.25cm] at (B) {$p$ nodes};
  \end{tikzpicture}
    \caption{$\CT{p}{s}$}
         \label{fig:CTps}
     \end{subfigure}
    \end{tabular}
    \caption{Coconut tree graphs.}
    \label{fig:coconuttree}
\end{figure}
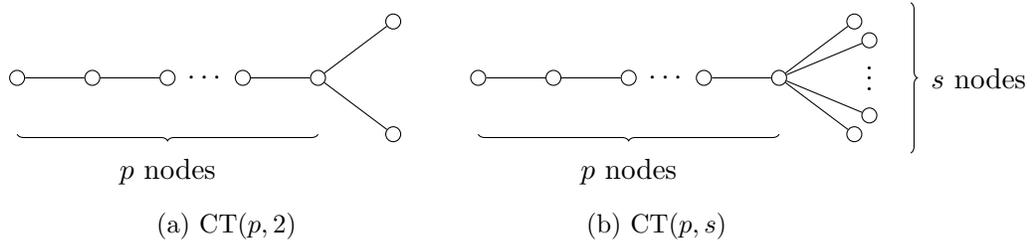

In this work, we enumerate arithmetical structures on the coconut tree graph $\CT{p}{s}$, which consists of a path on $p>0$ vertices to which we append $s>0$ leaves to the rightmost vertex on the path. Figures~\ref{fig:CTp2} and~\ref{fig:CTps} illustrate $\CT{p}{2}$ and $\CT{p}{s}$, respectively. 
We refer to the vertex to which we append leaves as the \textbf{central} vertex of the coconut tree.
The coconut tree graph $\CT{p}{2}$ is also referred to as a bident, and Archer et al. studied arithmetical structures on bidents by counting a smaller subset of arithmetical structures, which they called smooth arithmetical structures \cite{archer_2020}.
In this context, an arithmetical structure is said to be \textbf{smooth} if, for each noncentral vertex, the sum of the weights of its neighbors is $k$ times its weight for some $k$ strictly greater than 1. This condition guarantees that the structure cannot be obtained from a smaller structure by a process called subdivision. The technical definitions of these terms are  given in Definitions~\ref{def:smooth} and \ref{def:subdivision}.
Throughout, we will let $\sarith{G}$ denote the set of smooth arithmetical structures on a graph $G$. 
See Figure~\ref{fig:CT-3-3} for an example of a smooth arithmetical structure on $\CT{3}{3}$.

\begin{figure}
    \centering
    \begin{tikzpicture}
    \tikzstyle{every node}=[draw,circle,fill=white,minimum size=4pt,inner sep=2pt]
    \node[](v1) at (4,1){3};
    \node[](v2) at (4,0){3};
    \node[](v3) at (4,-1){2};
    \node[](v4) at (1,0){2};
    \node[](v5) at (2,0){4};
    \node[](v6) at (3,0){6};
    \draw(v1)--(v6)--(v5)--(v4);
    \draw(v2)--(v6)--(v3);
    \end{tikzpicture}
    \caption{A smooth arithmetical structure on 
    $\CT{3}{3}$.}
    \label{fig:CT-3-3}
\end{figure}
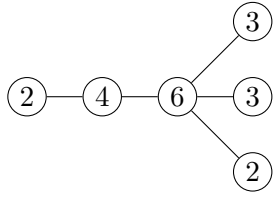

Our main result generalizes the work of Archer et~al.~\cite{archer_2020}, and enumerates arithmetical structures on coconut trees $\CT{p}{s}$, for all $p > 0$ and $s\geq 1$. 
Let $B(n, k) = \frac{n-k+1}{n+1}\binom{n+k}{n}$ be a ballot number 
\cite[\href{https://oeis.org/A009766}{A009766}]{OEIS}, which were introduced by Carlitz \cite{Carlitz:1972}. We can now state our main result, which reduces the problem of counting arithmetical structures on coconut trees
to counting smooth arithmetical structures.

\begin{customthm}{\ref{thm:CT-number-of-AS}}
If $p\geq 1, s\geq 2$ then the number of arithmetical structures for the coconut tree $\CT{p}{s}$ is given by
\begin{align}\label{eq:recursion with A}
|\arith{\CT{p}{s}}|  &=  \sum_{j=0}^s \binom{s}{j} A(p+s - j, s - j)
\end{align}
where
\begin{align*}
    A(x, 0) &= C_{x-1} \mbox{ for $x\geq 1$,}\\
    A(x, 1) &= C_{x-1} - C_{x-2}\mbox{ for $x\geq 2$, and}\\
    A(x, y) &= \sum_{i=y+1}^{x} B(x-y-1, x-i)|\sarith{\CT{i-y}{y}}|\mbox{ for $x\geq 3, y\geq2$.}
\end{align*}
\end{customthm}

This paper is organized as follows. In Section~\ref{sec:background}, we give the necessary background on arithmetical structures on graphs, including the definition of smooth arithmetical structures. In Section \ref{sec:smothsub}, we describe the smoothing and subdividing operations needed in order to count arithmetical structure on coconut tree graphs. In Section~\ref{sec:4}, we prove Theorem~\ref{thm:CT-number-of-AS} counting arithmetical structures on coconut trees
by extending the definition of smooth arithmetical structures to coconut trees with $s\geq 2$. We end with Section~\ref{sec:futurework} by presenting a few directions for further study. 

\begin{remark}
    We point the interested reader to \href{https://github.com/rabidrabbit/arithmetic-structures}{GitHub} containing code whose inputs are a specific graph $G$ and a maximum value $m$, and whose output is the set of arithmetical structures on $G$ with values less than or equal to $m$. 
    \end{remark}

\section{Background on (smooth) arithmetical structures}\label{sec:background}
In this section, we provide the necessary definitions and notations to make our approach precise. Throughout our work $G = (V, E)$ is a simple undirected connected finite graph with vertex set $V$ and edge set $E$. The vertices $u$ and $v$ are \textbf{adjacent} if there exists an edge between them. Denoting the set of vertices of $G$ by $V=\{v_1,v_2,\ldots, v_n\}$, the \textbf{adjacency matrix} $A=(a_{i,j})$ of $G$ is the square-matrix of dimension $|V| \times |V|$ defined by
\[a_{i,j} = \begin{cases}
    1&\mbox{if $v_i$ and $v_j$ are adjacent}\\
    0&\mbox{otherwise}.
\end{cases}\]
Observe that $A$ is symmetric. 

\begin{definition}\label{def:astructure}
Let $G$ be a graph. An \textbf{arithmetical structure} on $G$ is defined as a pair of vectors $(\textbf{d}, \textbf{r})\in \mathbb{N}^{|V|}\times \mathbb{N}^{|V|}$ that satisfy
\begin{equation}\label{eq:defarith}
(\text{diag}(\textbf{d}) - A) \cdot \textbf{r}  =  {\textbf{0}},
\end{equation}
 the entries of $\textbf{r}$ have no common factors other than $1$, and where $\text{diag}(\textbf{d})$ is the diagonal matrix with entries given by the vector $\textbf{d}$ and $A$ is the adjacency matrix of $G$.
\end{definition}
Equivalently, an arithmetical structure on $G$ can be defined as an assignment of numbers to the vertices of $G$ such that:\begin{enumerate}
    \item The assigned number of a vertex divides the sum of its neighbors' assigned numbers.
    \item The gcd of all the assigned numbers is 1.
\end{enumerate}
Throughout, we refer to most arithmetical structures by their $\textbf{r}=(r_1, r_2, \dots, r_n)$ vector as it represents the assignment of values to the vertices of the graph $G$. Equation \eqref{eq:defarith} means that the entries of the vector $\mathbf{d}$ measure by what factor each vertex divides the sum of its neighbors. Hence, $\textbf{r}$ completely determines $\mathbf{d}$. Conversely, if $\mathbf{d}$ satisfies Equation \eqref{eq:defarith} for some vector $\mathbf{r}$ with positive coefficients, Lorenzini \cite[Proposition 1.1]{lorenzini_1989} showed that the matrix $(\text{diag}(\textbf{d}) - A)$ has rank $|V|-1$, hence its kernel is one-dimensional and there is a unique vector $\mathbf{r}$ such that $(\textbf{d},\mathbf{r})$ is an arithmetical structure (since the entries of $\mathbf{r}$ must be positive and have $1$ as their only common factor). We do remark that just having a $\textbf{d}$ vector that satisfies \eqref{eq:defarith} does not automatically imply that there is a vector in the kernel of $(\text{diag}(\textbf{d}) - A)$ with strictly positive coefficients \cite[Remark 3.11]{CorralesValencia}.

 We now formally define coconut tree graphs.
\begin{definition}
A coconut tree $\CT{p}{s}$ for $p>0$ and $s>0$ is a path graph $\mathcal{P}_p$ with $s$ leaf vertices at one of the ends. Note that
$\CT{p}{1} = \mathcal{P}_{p+1}$ and $\CT{p}{0}=\mathcal{P}_p$.
\end{definition}

Given a coconut tree graph $\CT{p}{s}$, we label the path portion of the graph by $v_1$ to $v_p$, and denote the leaf vertices as $v_{\ell_1}, \dots, v_{\ell_s}$, such that $v_p$ connects to all of the leaf vertices. This is illustrated in Figure~\ref{fig:illustratingnotation1}.
This is the same labeling we use for the vectors $\mathbf{r}$ and $\mathbf{d}$, and we write $\mathbf{r}=(r_1,\dots,r_p,r_{\ell_1},\dots,r_{\ell_s})$ and $\mathbf{d}=(d_1,\dots,d_p,d_{\ell_1},\dots,d_{\ell_s})$. Note that this differs slightly from the notation used in Archer et~al. in \cite{archer_2020}.

\begin{figure}[ht]
    \centering
     \begin{tikzpicture}
    \node[](vd) at (6,.2){$\vdots$};
    \node[](vc) at (4,.5){$\cdots$};
    \tikzstyle{every node}=[draw,circle,minimum size=4pt,inner sep=2pt]
    \node[](v1) at (6,2){$v_{\ell_1}$};
    \node[](v2) at (6,1){$v_{\ell_2}$};
    \node[](v3) at (6,-1){$v_{\ell_s}$};
\node[](v4) at (1,.5){$v_1$};
    \node[](v5) at (2,.5){$v_2$};
    \node[](v6) at (3,.5){$v_3$};
    \node[](v7) at (5,.5){$v_{p}$};
    
    \draw(v1)--(v7);
    \draw(v2)--(v7);
    \draw(v3)--(v7);
    \draw(v4)--(v5)--(v6);
    \draw(vc)--(v6);
    \draw(vc)--(v7);
    
    \end{tikzpicture}
    \caption{The
    coconut tree $\CT{p}{s}$.}
    \label{fig:illustratingnotation1}
\end{figure}
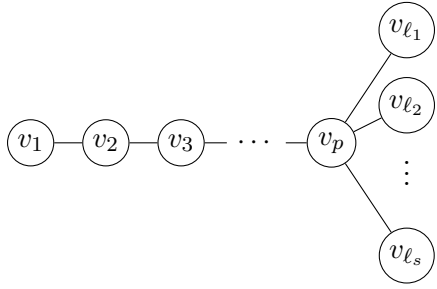

\begin{definition}\label{def:smooth}
An arithmetical structure $(\textbf{d}, \textbf{r})$ on 
$\CT{p}{s}$ is \textbf{smooth} if $$d_1,\dots,d_{p-1},d_{\ell_1}, \dots, d_{\ell_s}\geq 2.$$ 
The set of smooth arithmetical structures on $\CT{p}{s}$ is denoted $\sarith{\CT{p}{s}}$.
\end{definition}

Note that the above definition does not place any restrictions on $d_p$. In the case of 
$\CT{p}{2}$,  Archer et al.~\cite{archer_2020} proved that $d_p$ must be $1$ for smooth arithmetical structures. For $s\geq 3$, this result no longer holds. For example, consider the smooth arithmetical structure on $\CT{3}{3}$ presented in Figure~\ref{fig:CT-3-3}.
 Observe that $\mathbf{r}=(r_1, r_2, r_3,r_{\ell_1}, r_{\ell_2}, r_{\ell_3}) = (2,4,6,3, 3, 2)$ and $\mathbf{d}=(2,2,2,2,2,3)$.

\begin{lemma}\label{lem:CT-increasing-r}
Let $(\textbf{d}, \textbf{r})$ be an arithmetical structure on 
$\CT{p}{s}$. The following conditions are equivalent:
\begin{enumerate}
    \item $d_i \geq 2$ for $i\leq p-1$.
    \item $0 < r_2 - r_1 \leq \dots \leq r_{p-1}-r_{p-2} \leq r_p-r_{p-1}$.
    \item $r_1 < r_2 < \dots < r_{p}$.
\end{enumerate}
\end{lemma}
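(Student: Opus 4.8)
The plan is to prove the three conditions equivalent by establishing the cycle $(1)\Rightarrow(2)\Rightarrow(3)\Rightarrow(1)$, with everything resting on the scalar equations encoded by $(\diag(\mathbf d)-A)\mathbf r=\mathbf 0$. Reading off the rows corresponding to the path vertices gives $d_1 r_1 = r_2$ at the pendant end $v_1$ and $d_i r_i = r_{i-1}+r_{i+1}$ for each interior path vertex $2\le i\le p-1$; crucially these rows involve only $r_1,\dots,r_p$, so the leaves and the central equation at $v_p$ never enter. The engine of the whole argument is the second-difference identity obtained from the interior equations,
$$(r_{i+1}-r_i)-(r_i-r_{i-1})=(d_i-2)\,r_i \qquad (2\le i\le p-1),$$
together with $r_2-r_1=(d_1-1)r_1$ from the $v_1$ equation. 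Since every $r_i$ is a positive integer, each of these identities ties the sign of a (second) difference directly to whether the corresponding $d_i$ is at least $2$.

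For $(1)\Rightarrow(2)$ I would assume $d_i\ge 2$ for all $i\le p-1$ and simply substitute: $r_2-r_1=(d_1-1)r_1\ge r_1>0$ yields the strict first inequality, while $(r_{i+1}-r_i)-(r_i-r_{i-1})=(d_i-2)r_i\ge 0$ for $2\le i\le p-1$ yields the chain $r_2-r_1\le r_3-r_2\le\cdots\le r_p-r_{p-1}$, which is exactly $(2)$. The implication $(2)\Rightarrow(3)$ is then immediate, since all consecutive differences are bounded below by $r_2-r_1>0$ and are therefore positive, giving $r_1<r_2<\cdots<r_p$.

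The step that needs the most care is $(3)\Rightarrow(1)$, because here I must invoke integrality rather than pure substitution. Assuming $r_1<r_2<\cdots<r_p$, the $v_1$ equation gives $r_2=d_1 r_1>r_1$, which forces $d_1\ge 2$. For $2\le i\le p-1$, recall that $d_i$ is a positive integer, so the only way $d_i\ge 2$ can fail is $d_i=1$; but substituting $d_i=1$ into $d_i r_i=r_{i-1}+r_{i+1}$ gives $r_{i+1}=r_i-r_{i-1}<r_i$ (using $r_{i-1}>0$), contradicting $r_i<r_{i+1}$. Hence $d_i\ge 2$ for all $i\le p-1$, establishing $(1)$ and closing the cycle.

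I would finish by checking the degenerate small cases: for $p=1$ there are no constraints indexed by $i\le p-1$, so all three conditions hold vacuously, while for $p=2$ only the $v_1$ relation survives and the argument collapses to $d_1\ge 2\iff r_2-r_1>0\iff r_1<r_2$. I do not anticipate a genuine obstacle here: once the second-difference identity is in hand the forward implications are routine, and the only real subtlety is remembering, in $(3)\Rightarrow(1)$, that positivity of the $r_i$ together with $d_i\in\N$ rules out $d_i=1$.
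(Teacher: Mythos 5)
Your proof is correct, and it is essentially the argument the paper relies on: the paper simply cites Lemma~2.1 of Archer et al., whose proof is exactly this second-difference computation $(r_{i+1}-r_i)-(r_i-r_{i-1})=(d_i-2)r_i$ together with $r_2-r_1=(d_1-1)r_1$ and the integrality of the $d_i$ for the reverse direction. You have merely written out in full what the paper leaves to the citation, so there is nothing to flag.
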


\begin{proof}
The proof is identical to Lemma 2.1 of \cite{archer_2020} as the proof does not refer to any leaf vertices.
\end{proof}

\begin{lemma}\label{lem:CT-smooth-equivalency}
An arithmetical structure $(\textbf{d}, \textbf{r})$ on $\CT{p}{s}$ is smooth 
 if and only if $r_{\ell_j} < r_p$ for all $1\leq j\leq s$ and $r_1 < r_2 < \dots < r_{p}$. 
\end{lemma}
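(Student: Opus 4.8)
The plan is to split the smoothness condition into two independent parts---one controlling the path vertices $v_1,\dots,v_{p-1}$ and one controlling the leaves $v_{\ell_1},\dots,v_{\ell_s}$---and to dispatch each with a different tool. By Definition~\ref{def:smooth}, the structure $(\mathbf{d},\mathbf{r})$ is smooth precisely when $d_i\geq 2$ for all $i\leq p-1$ and $d_{\ell_j}\geq 2$ for all $1\leq j\leq s$. The first family of inequalities is already equivalent to $r_1<r_2<\dots<r_p$ by Lemma~\ref{lem:CT-increasing-r} (the equivalence of conditions (1) and (3)), since that lemma concerns only the path portion of the graph. Thus it remains to show that, in any arithmetical structure on $\CT{p}{s}$, the condition $d_{\ell_j}\geq 2$ is equivalent to $r_{\ell_j}<r_p$ for each fixed $j$, after which combining the two parts yields the claim.

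The key step is to read off the defining equation at a single leaf. Because $v_{\ell_j}$ is adjacent only to the central vertex $v_p$, the $\ell_j$-th row of $(\diag(\mathbf{d})-A)\mathbf{r}=\mathbf{0}$ collapses to the lone relation $d_{\ell_j}r_{\ell_j}=r_p$. This decouples each leaf from the rest of the tree and reduces the leaf half of smoothness to an elementary divisibility fact about positive integers. From $d_{\ell_j}r_{\ell_j}=r_p$, if $d_{\ell_j}\geq 2$ then $r_p=d_{\ell_j}r_{\ell_j}\geq 2r_{\ell_j}>r_{\ell_j}$, so $r_{\ell_j}<r_p$; conversely, if $r_{\ell_j}<r_p$ then $d_{\ell_j}=r_p/r_{\ell_j}>1$, and since $d_{\ell_j}$ is a positive integer this forces $d_{\ell_j}\geq 2$. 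Running this equivalence over all $j$ and pairing it with Lemma~\ref{lem:CT-increasing-r} gives exactly that $(\mathbf{d},\mathbf{r})$ is smooth if and only if $r_1<\dots<r_p$ and $r_{\ell_j}<r_p$ for every $j$.

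I do not expect a serious obstacle: the content lies entirely in recognizing that Lemma~\ref{lem:CT-increasing-r} already resolves the path but says nothing about the leaves, so the leaf equations $d_{\ell_j}r_{\ell_j}=r_p$ must be extracted and used separately. The only subtlety worth stating carefully is the integrality of $d_{\ell_j}$, which is what upgrades the strict inequality $r_{\ell_j}<r_p$ to the bound $d_{\ell_j}\geq 2$ rather than merely $d_{\ell_j}>1$.
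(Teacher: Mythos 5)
Your proof is correct and follows essentially the same route as the paper's: both reduce the path half to the equivalence of conditions (1) and (3) in Lemma~\ref{lem:CT-increasing-r} and handle each leaf via the relation $d_{\ell_j}r_{\ell_j}=r_p$, using integrality of $d_{\ell_j}$ to pass between $d_{\ell_j}\geq 2$ and $r_{\ell_j}<r_p$. Your explicit remark on why $d_{\ell_j}>1$ upgrades to $d_{\ell_j}\geq 2$ is a small point of added care, but the argument is the same.
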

\begin{proof}
$(\Rightarrow)$ Let $(\mathbf{d},\mathbf{r})$ be a smooth structure on $\CT{p}{s}$. By definition of smooth, $d_i\geq2$ for all $1\leq i\leq p-1$ and $d_{\ell_j}\geq2$ for all $1\leq j\leq s$. 
Since $d_{\ell_j}r_{\ell_j}=r_p$, the condition $d_{\ell_j}\geq2$ implies $r_{\ell_j}<r_p$ for $1\leq j\leq s$, and by Lemma~\ref{lem:CT-increasing-r} item (3), $r_1<r_2<\dots<r_p$.

$(\Leftarrow)$ Let $(\mathbf{d},\mathbf{r})$ be an arithmetical structure on $\CT{p}{s}$, such that $r_{\ell_j}<r_p$ for all $1\leq j\leq s$, and $r_1<r_2<\dots<r_p$. 
By the divisibility condition, $d_{\ell_j}r_{\ell_j}=r_p$ for $1\leq j\leq s$, and since $r_{\ell_j}<r_p$, this gives $d_{\ell_j}\geq2$. By Lemma~\ref{lem:CT-increasing-r}, $r_1<r_2<\dots<r_p$ is equivalent to $d_i\geq2$ for $1\leq i\leq p-1$. Therefore $d_i\geq2$ for all $1\leq i\leq p-1$ and $d_{\ell_j}\geq2$ for all $1\leq j\leq s$. Hence, $(\mathbf{d},\mathbf{r})$ is a smooth structure.
\end{proof}

\begin{lemma}\label{lem:CT-gcd-leaf-nodes}
Any smooth arithmetical structure $(\textbf{d}, \textbf{r})$ on 
$\CT{p}{s}$ has $\gcd(r_{\ell_1}, \dots, r_{\ell_s}) = 1$.
\end{lemma}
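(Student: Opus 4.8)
The plan is to argue by contradiction using the coprimality condition that is built into the definition of an arithmetical structure. Suppose $\gcd(r_{\ell_1},\dots,r_{\ell_s})>1$ and let $q$ be a prime dividing every leaf value $r_{\ell_j}$. My goal is to propagate divisibility by $q$ across the whole graph and thereby contradict the fact that $\gcd(r_1,\dots,r_p,r_{\ell_1},\dots,r_{\ell_s})=1$, which is part of Definition~\ref{def:astructure}.

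First I would write down the local divisibility equations coming from $(\diag(\mathbf d)-A)\mathbf r=\mathbf 0$. Each leaf $v_{\ell_j}$ has $v_p$ as its unique neighbor, so $d_{\ell_j}r_{\ell_j}=r_p$; hence $q\mid r_{\ell_j}$ forces $q\mid r_p$. Next, the central vertex satisfies $d_p r_p=r_{p-1}+\sum_{j=1}^s r_{\ell_j}$ (when $p\ge 2$), and since $q$ divides $r_p$ and each $r_{\ell_j}$, it must also divide $r_{p-1}$.

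With $q\mid r_p$ and $q\mid r_{p-1}$ in hand, I would induct downward along the path. For each interior vertex $v_i$ with $2\le i\le p-1$ the relation $d_i r_i=r_{i-1}+r_{i+1}$ holds; assuming inductively that $q\mid r_{i+1}$ and $q\mid r_i$, we get $q\mid r_{i-1}$. Running $i$ from $p-1$ down to $2$ shows $q\mid r_i$ for every path vertex, including $r_1$. Consequently $q$ divides all of $r_1,\dots,r_p,r_{\ell_1},\dots,r_{\ell_s}$, contradicting $\gcd(\mathbf r)=1$; therefore $\gcd(r_{\ell_1},\dots,r_{\ell_s})=1$.

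The only care needed is in the boundary cases, and this is the mild obstacle: I must phrase the downward induction and the degenerate $p$ so that no case is omitted. When $p=1$ the central vertex is the sole path vertex and there is no $r_{p-1}$ term, so $q\mid r_{\ell_j}$ gives $q\mid r_p=r_1$ directly and the conclusion is immediate; when $p=2$ the downward induction is vacuous after the central step. I would also note that this argument uses only the divisibility relations and the global gcd condition and never invokes smoothness, so the statement in fact holds for every arithmetical structure on $\CT{p}{s}$; I restrict to smooth structures only because that is the setting in which the lemma is later applied.
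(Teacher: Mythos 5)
Your proof is correct and follows essentially the same route as the paper's: both propagate divisibility from the leaves to $r_p$, then to $r_{p-1}$ via the relation at the central vertex, and then down the path by induction; your use of a prime $q$ and a contradiction rather than working directly with $g=\gcd(r_{\ell_1},\dots,r_{\ell_s})$ is only a cosmetic difference. Your closing observation that smoothness is never used (so the conclusion holds for every arithmetical structure on $\CT{p}{s}$) is accurate and applies equally to the paper's own argument.
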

\begin{proof}
Denote $g = \gcd(r_{\ell_1}, \dots, r_{\ell_s})$. Since $r_{\ell_j}\mid r_p$ for all $1\leq j \leq s$, this implies $g \mid r_p$. Since $r_{p-1} = d_pr_p - \sum_{j=1}^s r_{\ell_j}$, we have $g \mid r_{p-1}$. 
Consider the previous argument as the base case for induction. 
By induction, assume that $g\mid r_j$ for all $i\leq j\leq p$. Now we want to show that $g\mid r_{i-1}$.
Note $d_{i}=\frac{r_{i+1}+r_{i-1}}{r_{i}}$ which simplifies to
$r_{i-1}=d_ir_{i}-r_{i+1}$ (where we take $r_{p+1}$ to be $0$).
By induction hypothesis $g\mid r_{i+1}$ and $g\mid r_i$, which implies that $g\mid r_{i-1}$.
Thus, $g\mid r_i$ for all $1\leq i\leq p$. Thus $g$ divides every label on the graph $\CT{p}{s}$,cimplying that $g=1$. 
\end{proof}

\section{Smoothing and Subdivision}\label{sec:smothsub}
In this section, we present two operations that have proven useful in the enumeration of arithmetical structures on paths and cycles \cite{braun_2018}, bidents \cite{archer_2020}, $E_n$-graphs \cite{vetter21}, as well as other works involving arithmetical structures \cite{CorralesValencia,DiazLopezEtAl,glass20,keyes21,WangHou}.

\subsection{Smoothing Arithmetical Structures on Coconut Trees}
We can smooth arithmetical structures on $\CT{p}{s}$ in a similar way as developed in Archer et al.~\cite{archer_2020} for smoothing arithmetical structures on $\CT{p}{2}$. 

Before stating the next definition we remark that in graph theory the use of the word ``smoothing a vertex'' refers to the replacement  of a degree two vertex by an edge connecting the two neighbors. We use the same naming convention for this operation, but stress that it does not necessarily imply ``smoothness'' in the sense of arithmetical structures as divisibility conditions are generally not satisfied if you smooth at a vertex $v$ with $d_v\neq 1$.

\begin{definition}\label{def:CT-smoothing-path-vertices}
Let $p,s \in \N$. For ${2}\leq i\leq p-1$, a \textbf{smoothing at the vertex $v_i$ of degree $2$} when $d_i = 1$ is defined as an operation that takes an arithmetical structure $(\textbf{d}, \textbf{r})$ on $\CT{p}{s}$ and returns an arithmetical structure $(\textbf{d}', \textbf{r}')$ on $\CT{p-1}{s}$, where the components of the vectors $\textbf{d}'$, $\textbf{r}'$ are as follows:
\begin{align*}
r_j'    &=  \begin{cases}
            r_j         &   j \in \{1,2,\dots,i-1,\ell_1, \ell_2, \dots, \ell_s\} \\
            r_{j+1}     &   j \in \{i,i+1, \dots, p-1\}
            \end{cases} \\
            \intertext{and}
d_j'    &=  \begin{cases}
            d_j         &   j \in \{1, \dots, i - 2,\ell_1, \ell_2, \dots, \ell_s\} \\
            d_j - 1     &   j = i - 1 \\
            d_{j+1} - 1 &   j = i \\
            d_{j+1}     &   j \in \{i + 1, \dots, p-1\}.
            \end{cases}
\end{align*}
\end{definition}
The requirement that $d_i=1$ is present to ensure that the resulting vectors $(\mathbf{d}',\mathbf{r}')$ form a structure on $\CT{p-1}{s}$ as shown in Proposition \ref{prop:smoothing}. Before we present this theorem, we illustrate Definition~\ref{def:CT-smoothing-path-vertices}.
\begin{figure}
    \centering
    \begin{tikzpicture}
    \tikzstyle{every node}=[draw,circle,fill=white,minimum size=4pt,inner sep=2pt]
    \node[](v1) at (6,1){2};
    \node[](v2) at (6,0){2};
    \node[](v3) at (6,-1){1};
    \node[](v4) at (1,0){1};
    \node[fill=none,dashed](v5) at (2,.25){3};
    \node[](v6) at (3,0){2};
    \node[](v7) at (4,0){3};
    \node[](v8) at (5,0){4};
    \draw(v1)--(v8)--(v7)--(v6)--(v5)--(v4);
    \draw(v2)--(v8)--(v3);
    \draw[thick, ->](7,0)--(8,0);
    \end{tikzpicture}
\quad
\begin{tikzpicture}
    \tikzstyle{every node}=[draw,circle,fill=white,minimum size=4pt,inner sep=2pt]
    \node[](v1) at (6,1){2};
    \node[](v2) at (6,0){2};
    \node[](v3) at (6,-1){1};
    \node[](v5) at (2,0){1};
    \node[](v6) at (3,0){2};
    \node[](v7) at (4,0){3};
    \node[](v8) at (5,0){4};
    \draw(v1)--(v8)--(v7)--(v6)--(v5);
    \draw(v2)--(v8)--(v3);
    \end{tikzpicture}

    \caption{Smoothing the $\textbf{r}$-structure $(1,3,2,3,4,2,2,1)$ on $\CT{5}{3}$ at $v_2$ (which we highlight as a dashed vertex) to get $\textbf{r}'$-structure $(1,2,3,4,2,2,1)$ on $\CT{4}{3}$.}
    \label{fig:CT-5-3}
\end{figure}
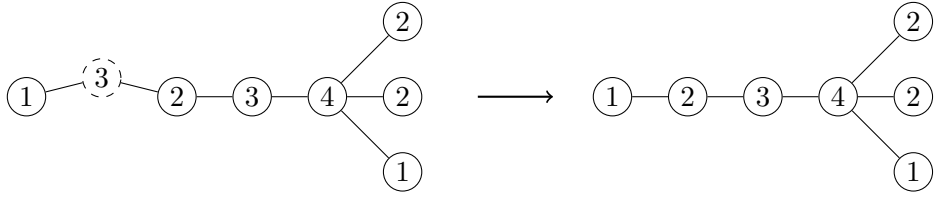
\begin{example}Consider the coconut tree $\CT{5}{3}$ with 
\[ \mathbf{d}=(3,1,3,2,2,2,2,4) \text{ and } \mathbf{r}=(1,3,2,3,4,2,2,1).\]
Since $d_2=1$, this arithmetical structure is not smooth. We can smooth at $v_2$ by removing $v_2$ and connecting $v_1$ to $v_3$, as illustrated in the right subfigure in Figure~\ref{fig:CT-5-3}. 
Note that the removal of $v_2$  yields an arithmetical structure whose entries of the $\mathbf{d}$-vector for $v_1$ and $v_3$ are both reduced by 1. Namely, the resulting structure is 
\[\mathbf{d}'=(2,2,2,2,2,2,4) \text{ and } \mathbf{r}'=(1,2,3,4,2,2,1),\] which is a smooth arithmetical structure on $\CT{4}{3}$.
\end{example}

\begin{proposition}\label{prop:smoothing}
Let $p\geq2$ and $s\geq1$ be integers and let $(\mathbf{d},\mathbf{r})$ be an arithmetical structure on $\CT{p}{s}$. If $d_i=1$ for some $1\leq i\leq p-1$, then $(\mathbf{d}',\mathbf{r}')$ resulting from smoothing vertex $v_i$ is a valid arithmetical structure on $\CT{p-1}{s}$.
\end{proposition}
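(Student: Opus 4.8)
The plan is to verify directly that the pair $(\mathbf{d}', \mathbf{r}')$ produced by Definition~\ref{def:CT-smoothing-path-vertices} satisfies the defining equation~\eqref{eq:defarith} for an arithmetical structure on $\CT{p-1}{s}$, namely that at every vertex the assigned value divides (with the prescribed quotient $d_j'$) the sum of its neighbors. Since the smoothing operation only modifies the graph locally near $v_i$ — deleting $v_i$ and joining $v_{i-1}$ to $v_{i+1}$ — I would first observe that every vertex of $\CT{p-1}{s}$ other than the new neighbors $v_{i-1}'$ and $v_i'$ (which carry the old values $r_{i-1}$ and $r_{i+1}$) has exactly the same neighborhood and the same assigned value as in the original structure, so the divisibility relation $d_j' r_j' = \sum_{k \sim j} r_k'$ is inherited unchanged from $(\mathbf{d}, \mathbf{r})$. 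In particular, all leaf vertices and all path vertices with index outside $\{i-1, i, i+1\}$ require no work.

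The substantive verification is at the two vertices adjacent to the smoothed location. Here I would use the original relation $d_i r_i = r_{i-1} + r_{i+1}$ together with the hypothesis $d_i = 1$, which gives the key identity $r_i = r_{i-1} + r_{i+1}$. For the vertex that in $\CT{p-1}{s}$ carries value $r_{i-1}$ with new divisor $d_{i-1}' = d_{i-1} - 1$: in the original graph $d_{i-1} r_{i-1} = r_{i-2} + r_i$, and after smoothing its neighbors are $v_{i-2}$ (value $r_{i-2}$) and the new neighbor (value $r_{i+1}$), so I must check $(d_{i-1}-1) r_{i-1} = r_{i-2} + r_{i+1}$. Substituting $r_i = r_{i-1} + r_{i+1}$ into the original relation yields exactly this. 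A symmetric computation handles the vertex carrying $r_{i+1}$ with divisor $d_{i+1}-1$, using $d_{i+1} r_{i+1} = r_i + r_{i+2}$ (with the convention $r_{p+1} = 0$ when $i+1 = p$). I would also note that all entries of $\mathbf{r}'$ remain positive integers and that $\gcd$ condition is inherited, since $\mathbf{r}'$ consists of a subset of the original coordinates, whose overall gcd was $1$; one should check that deleting the coordinate $r_i$ does not introduce a common factor, which follows because $r_i = r_{i-1} + r_{i+1}$ is a $\Z$-linear combination of surviving entries, so any common divisor of $\mathbf{r}'$ already divided $r_i$.

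I expect the main subtlety to be bookkeeping at the boundary indices rather than any deep obstacle: one must confirm that the case $i = p-1$ (so that $v_{i+1} = v_p$ is the central vertex adjacent to all leaves) is handled correctly, since then the vertex carrying $r_{i+1} = r_p$ has the leaves among its neighbors and the relation to verify is $(d_p - 1) r_p = r_{i-1} + \sum_{j=1}^s r_{\ell_j} + r_{p+1}$ with $r_{p+1}=0$; this again reduces to the same algebra after substituting $r_{p-1} = r_{p-2} + \cdots$ appropriately. The only genuine constraint to flag is why $d_i = 1$ is necessary: if $d_i \neq 1$ the identity $r_i = r_{i-1}+r_{i+1}$ fails and the adjusted divisors $d_{i-1}-1$, $d_{i+1}-1$ no longer yield integer divisibility, so the hypothesis is exactly what makes the local recomputation close. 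Assembling these local checks, every vertex of $\CT{p-1}{s}$ satisfies the divisibility condition with the stated $\mathbf{d}'$, and the gcd condition holds, so $(\mathbf{d}', \mathbf{r}')$ is a valid arithmetical structure on $\CT{p-1}{s}$.
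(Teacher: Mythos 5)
Your proposal is correct and follows essentially the same route as the paper's proof: the divisibility conditions are unchanged away from $v_{i-1},v_i,v_{i+1}$, and at the two affected vertices the identity $r_i=r_{i-1}+r_{i+1}$ (from $d_i=1$) lets you absorb the deleted value into the neighbor sums. You are in fact slightly more thorough than the paper, which does not explicitly verify the exact quotients $d_{i-1}-1$, $d_{i+1}-1$ or the preservation of the gcd condition.
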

\begin{proof}There are two cases to consider:\begin{itemize}
    \item For $j<i-1$ or $j>i+1$, the neighbors of $v_j$ are unchanged, so the divisibility condition still holds for $\mathbf{r}'$.
    \item For $j=i-1$, $j=i$ or $j=i+1$, we note that $d_i=1$ implies $r_i=r_{i-1}+r_{i+1}$, and since 
    $r_{i-1}| r_{i-2}+r_{i}$, we can substitute to get that $r_{i-1}|r_{i-2}+r_{i-1}+r_{i+1}$. Now note that as $r_{i-1}|r_{i-1}$, then it must be that $r_{i-1}|r_{i-2}+r_{i+1}$. An analogous argument shows that $r_{i+1}|r_{i-1}+r_{i+2}$. Thus ensuring that 
    removing $r_{i}$ preserves the divisibility condition on $r_{i-1}$ and $r_{i+1}$.
\end{itemize}
Therefore $(\textbf{d}',\textbf{r}')$ is an arithmetical structure on $\CT{p-1}{s}$, as claimed.
\end{proof}

We now give the definition for smoothing at a degree one vertex. Note that the smoothing process on a degree one vertex  removes either $v_1$ or one of the leaf nodes.

\begin{definition}\label{def:CT-smoothing-leaf-vertices}
Let $p,s\in\N$. A \textbf{smoothing at the vertex $v_{i}$} of degree $1$ for $i=1$ or $i\in\{\ell_1,\dots,\ell_s\}$ when $d_{i} = 1$ is defined as an operation that takes an arithmetical structure $(\textbf{d}, \textbf{r})$ on $\CT{p}{s}$ and returns an arithmetical structure $(\textbf{d}', \textbf{r}')$ on $\CT{p-1}{s}$ or $\CT{p}{s-1}$, where the components of the vectors $\textbf{d}'$, $\textbf{r}'$ are as follows:

\noindent For $v_1$, we have
\begin{align*}
r_j'    &=  \begin{cases}
            r_{j+1}        &\mbox{if }   j \in \{1,2,\dots p-1\}\\
            r_j            &\mbox{if }   j\in \{\ell_1,\dots,\ell_{s}\} \\
            \end{cases} \\
            \intertext{and}
d_j'    &=  \begin{cases}
            d_2 - 1     &\mbox{if }   j = 1 \\
            d_{j+1}         &\mbox{if }   j \in \{2,3,4,\dots,p-1\}\\
            d_j&\mbox{if } j \in\{\ell_1,\dots,\ell_{s}\} \\
            \end{cases}
\end{align*}
and for a leaf vertex $v_{\ell_i}$, we have 
\begin{align*}
r_j'    &=  \begin{cases}
            r_j         &\mbox{if }   j \in \{1,2,\dots,p,\ell_1,\dots,\ell_{i-1}\} \\
            r_{j+1}     &\mbox{if }   j \in \ell_{i},\ell_{i+1}, \dots, \ell_{s-1}\}
            \end{cases} \\
            \intertext{and}
d_j'    &=  \begin{cases}
            d_j         &\mbox{if }   j \in \{1,2,\dots,p-1,\ell_1,\dots,\ell_{i-1}\} \\
            d_j-1       &\mbox{if }   j = p\\
            d_{j+1}     &\mbox{if }   j \in \{\ell_{i},\ell_{i+1}\dots,\ell_{s-1}\}.\\
            \end{cases}
\end{align*}
\end{definition}

The following example illustrates the smoothing process when $d_1=1$ or $d_{\ell_i}=1$.
\begin{example}Consider $\CT{4}{4}$ with $\mathbf{d}=(1,3,2,3,2,2,3,1)$ and $\mathbf{r}=(2,2,4,6,3,3,2,6)$.
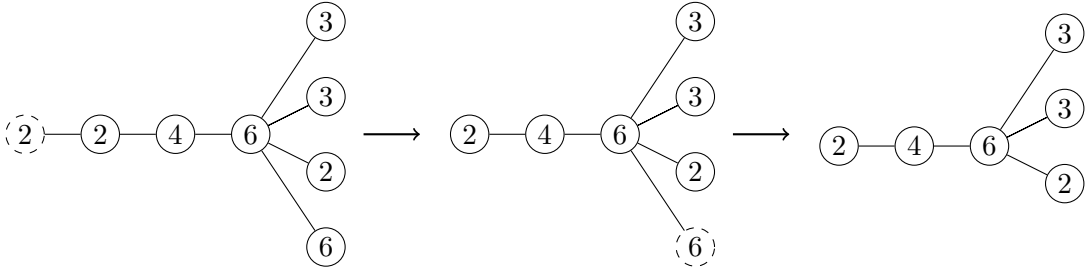
\begin{figure}
    \centering
    \begin{tikzpicture}
    \tikzstyle{every node}=[draw,circle,fill=white,minimum size=4pt,inner sep=2pt]
    \node[](v1) at (5,1.5){3};
    \node[](v2) at (5,.5){3};
    \node[](v3) at (5,-.5){2};
    \node[dashed,fill=none](v4) at (1,0){2};
    \node[](v5) at (2,0){2};
    \node[](v6) at (3,0){4};
    \node[](v7) at (4,0){6};
    \node[](v8) at (5,-1.5){6};
    \draw(v1)--(v7)--(v6)--(v5)--(v4);
    \draw(v8)--(v7)--(v2)--(v7)--(v3);
    \draw[thick, ->](5.5,0)--(6.25,0);
    \end{tikzpicture}\quad
    \begin{tikzpicture}
    \tikzstyle{every node}=[draw,circle,fill=white,minimum size=4pt,inner sep=2pt]
    \node[](v1) at (5,1.5){3};
    \node[](v2) at (5,.5){3};
    \node[](v3) at (5,-.5){2};
    \node[](v5) at (2,0){2};
    \node[](v6) at (3,0){4};
    \node[](v7) at (4,0){6};
    \node[dashed,fill=none](v8) at (5,-1.5){6};
    \draw(v1)--(v7)--(v6)--(v5);
    \draw(v8)--(v7)--(v2)--(v7)--(v3);
    \draw[thick, ->](5.5,0)--(6.25,0);
    \end{tikzpicture}\quad
    \begin{tikzpicture}
    \tikzstyle{every node}=[draw,circle,fill=white,minimum size=4pt,inner sep=2pt]
    \node[](v1) at (5,1.5){3};
    \node[](v2) at (5,.5){3};
    \node[](v3) at (5,-.5){2};
    \node[](v5) at (2,0){2};
    \node[](v6) at (3,0){4};
    \node[](v7) at (4,0){6};
    \node[color=white](v8) at (5,-1.5){};
    \draw(v1)--(v7)--(v6)--(v5);
    \draw(v7)--(v2)--(v7)--(v3);
    \end{tikzpicture}
    \caption{{Smoothing the \textbf{r}-structure $(2,2,4,6,3,3,2,6)$ on $\CT{4}{4}$ at vertex $v_1$ followed by smoothing at vertex $v_{\ell_4}$}. The right-most figure is the result of these smoothing operations.}
    \label{fig:CT-4-3}
\end{figure}\\
 For this example, we can first remove $r_1$ and relabel all of the path vertices to $r_{i}'=r_{i+1}$, reducing $d_2$ by 1. Then remove $r_{\ell_4}$, which reduces $d_4$ (or $d'_3$) by 1, no relabeling is required for this removal. 
This gives $\mathbf{r}'=(2,4,6,3,3,2)$ and $\mathbf{d}'=(2,2,2,2,2,3)$, which is a smooth 
arithmetical structure on $\CT{3}{3}$, illustrated on the right in Figure~\ref{fig:CT-4-3}. 
\end{example}

\begin{proposition}
Let $p,s\in\N$, and let $(\mathbf{d},\mathbf{r})$ be an arithmetical structure on $\CT{p}{s}$. If, for some $i \in \{1,\ell_{1},\dots,\ell_{s}\}$, we have $d_{i} = 1$, then $(\mathbf{d}',\mathbf{r}')$ resulting from smoothing at vertex $v_i$ is a valid arithmetical structure on $\CT{p-1}{s}$ if $i=1$ and $\CT{p}{s-1}$ if $i\in\{\ell_{1},\dots,\ell_{s}\}$.
\end{proposition}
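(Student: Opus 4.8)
The plan is to verify the divisibility condition is preserved at every affected vertex after deleting the degree-one vertex, handling the two cases $i=1$ and $i \in \{\ell_1,\dots,\ell_s\}$ separately, just as in Proposition~\ref{prop:smoothing}. The underlying mechanism is the same in both cases: the hypothesis $d_i = 1$ forces a linear relation among the neighbors of $v_i$, and deleting $v_i$ while decrementing the $d$-value of its unique neighbor by $1$ exactly compensates for the removed term in that neighbor's divisibility sum. Most vertices are untouched and so their conditions hold automatically; the only work is at the single neighbor of the deleted vertex.

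First I would treat the leaf case $i = \ell_i$, which is the cleaner of the two. Here $v_{\ell_i}$ has the central vertex $v_p$ as its only neighbor, so $d_{\ell_i} = 1$ gives $r_{\ell_i} = r_p$. The only vertex whose neighborhood changes is $v_p$, which loses the summand $r_{\ell_i} = r_p$ from its neighbor sum. Since the defining equation at $v_p$ reads $d_p r_p = r_{p-1} + \sum_{j} r_{\ell_j}$, subtracting $r_{\ell_i} = r_p$ from the right and simultaneously replacing $d_p$ by $d_p - 1$ on the left keeps the equation balanced, so $(\mathbf{d}',\mathbf{r}')$ satisfies the divisibility condition at $v_p$; all other conditions are inherited unchanged. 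One should also check $d_p - 1 \geq 1$, i.e. that $d_p \geq 2$, which follows because $r_p = r_{\ell_i} \mid r_p$ and the remaining positive summands $r_{p-1}$ and the other $r_{\ell_j}$ force $d_p r_p > r_p$. This produces a structure on $\CT{p}{s-1}$, and since no entry of $\mathbf{r}'$ is new, the gcd condition is preserved.

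Next I would handle $i = 1$. Now $v_1$ has unique neighbor $v_2$, and $d_1 = 1$ gives $r_1 = r_2$. The only affected vertex is $v_2$, whose neighbor sum $r_1 + r_3$ loses the term $r_1 = r_2$; decrementing $d_2$ to $d_2 - 1$ compensates exactly, by the same algebra as above, since $d_2 r_2 = r_1 + r_3$ becomes $(d_2 - 1) r_2 = r_3$ after deleting $v_1$ and relabeling. After shifting indices so that the former $v_2$ becomes the new $v_1$, this is a valid structure on $\CT{p-1}{s}$, and again one verifies $d_2 \geq 2$ so that $d_2 - 1 \geq 1$.

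I do not expect a serious obstacle here; the main point requiring care is bookkeeping with the relabeling of indices in the $i = 1$ case (so that the shifted vector genuinely matches the labeling convention for $\CT{p-1}{s}$) and confirming that the decremented $d$-entry remains a positive integer, so that $(\mathbf{d}',\mathbf{r}')$ lies in $\mathbb{N}^{|V|} \times \mathbb{N}^{|V|}$. The gcd-one condition on $\mathbf{r}'$ is immediate in both cases because every entry of $\mathbf{r}'$ already appears among the entries of $\mathbf{r}$, whose gcd is $1$.
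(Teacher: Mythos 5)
Your proposal is correct and follows essentially the same route as the paper's proof: in each case the hypothesis $d_i=1$ forces $r_i$ to equal its unique neighbor's value, so deleting $v_i$ and decrementing that neighbor's $d$-entry preserves its divisibility condition, while all other conditions are untouched. Your additional checks that the decremented $d$-entries stay positive and that the gcd condition survives are fine refinements the paper leaves implicit.
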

\begin{proof}
There are two cases to consider:\begin{itemize}
    \item If $d_1=1$, then $r_1=r_2$. We remove $v_1$ and relabel the vertices so that $r_j'=r_{j+1}$ for $j\in\{1,2,\ldots, p-1\}$. Since $r_2|r_1+r_3$ and $r_1=r_2$ then $r_2|r_3$, which implies that $d_1'=d_2-1$. All other divisibility conditions remain unchanged, hence the result is a valid arithmetical structure.
    \item If $d_{\ell_j}=1$ then $r_{\ell_j}=r_p$ for some leaf vertex $\ell_j$. We remove that leaf vertex. Since $r_{\ell_j}\equiv0\mod{r_p}$, subtracting $r_{\ell_j}$ does not change the divisibility condition at $r_p$. All other divisibility conditions remain unchanged, hence the result is a valid arithmetical structure. We also have that $d_p'=d_p-1$.
\end{itemize}
Therefore $(\textbf{d}',\textbf{r}')$ is an arithmetical structure on $\CT{p-1}{s}$ in the first case, and on $\CT{p}{s-1}$ in second case, as claimed.
\end{proof}

We use the following concept of ancestor as in \cite{archer_2020,glass20}.

\begin{definition}
Fix $p,s\in\N$. Let $1 \leq q \leq p$ and $1 \leq t \leq s$. An arithmetical structure $(\textbf{d}', \textbf{r}')$ on $\CT{q}{t}$ is called an \textbf{ancestor} if it is obtained from a sequence of smoothing operations on an arithmetical structure $(\textbf{d}, \textbf{r})$ on $\CT{p}{s}$. We call $(\textbf{d}, \textbf{r})$ a \textbf{descendant} of $(\textbf{d}', \textbf{r}')$ if and only if $(\textbf{d}', \textbf{r}')$ is an ancestor of  $(\textbf{d}, \textbf{r})$.
\end{definition}

\begin{lemma}\label{lem:CT-unique-smooth-ancestor}
Every arithmetical structure on $\CT{p}{s}$ with $d_{\ell_1}, \dots, d_{\ell_s} \geq 2$ has a unique smooth arithmetical structure on $\CT{q}{s}$ as an ancestor for some $q$ satisfying $1\leq q\leq p$.
\end{lemma}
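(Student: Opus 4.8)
The plan is to run the smoothing process only on the path part of the tree until it terminates, and then to show the resulting smooth structure is independent of the order of smoothings. The first step is to record an invariant: since $d_{\ell_1},\dots,d_{\ell_s}\geq 2$, no leaf is ever smoothable (smoothing at a leaf requires its $\mathbf d$-entry to equal $1$), so every admissible smoothing is performed at a path vertex $v_i$ with $1\leq i\leq p-1$. By Definitions~\ref{def:CT-smoothing-path-vertices} and~\ref{def:CT-smoothing-leaf-vertices}, such a smoothing leaves all leaf entries $d_{\ell_j}$ unchanged, so both the hypothesis $d_{\ell_j}\geq 2$ and the number of leaves $s$ are preserved throughout. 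In particular every ancestor obtained this way lives on some $\CT{q}{s}$ with the same $s$, and Proposition~\ref{prop:smoothing} (together with its degree-one companion) guarantees each step returns a valid arithmetical structure.

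For existence I would observe that each path smoothing decreases the number of path vertices by exactly one, so the process terminates after finitely many steps at a structure on some $\CT{q}{s}$ with $1\leq q\leq p$. A terminal structure admits no further path smoothing, i.e.\ $d_i\geq 2$ for all $1\leq i\leq q-1$; combined with $d_{\ell_j}\geq 2$, Definition~\ref{def:smooth} (or Lemma~\ref{lem:CT-smooth-equivalency}) shows this terminal structure is smooth. Hence a smooth ancestor exists.

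The substance of the lemma is uniqueness, which I would obtain from a confluence (diamond) argument resting on two facts. First, two path vertices that are simultaneously smoothable are never adjacent: if $d_i=1$ and $d_{i+1}=1$ with $2\leq i$, then $r_i=r_{i-1}+r_{i+1}$ and $r_{i+1}=r_i+r_{i+2}$ force $r_{i-1}=r_i-r_{i+1}<0$, a contradiction, and the boundary case $d_1=d_2=1$ forces $r_3=0$, also impossible. Second, smoothings at non-adjacent vertices commute: if $|i-j|\geq 2$ then smoothing at $v_i$ alters neither the values $r_{j-1},r_j,r_{j+1}$ nor the divisibility relation certifying $d_j=1$, so $v_j$ stays smoothable, and performing both in either order deletes the same two (distinct) entries $r_i,r_j$. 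Since the smoothing system terminates (the number of path vertices is a well-founded measure) and is locally confluent by these two facts, Newman's lemma yields a unique normal form; as the normal forms are precisely the smooth structures by the terminality computation above, the smooth ancestor is unique.

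The step I expect to be the main obstacle is verifying commutation at distance exactly two, namely $j=i+2$, where the two smoothings both decrement the $\mathbf d$-entry of the single shared neighbor $v_{i+1}$. Here one must check that this shared entry is reduced by the same total amount (and that all other $\mathbf d$-updates are disjoint) regardless of the order, so that both orders produce identical $(\mathbf d',\mathbf r')$; this is a short but order-sensitive bookkeeping computation with the formulas of Definition~\ref{def:CT-smoothing-path-vertices}, and it is the only place where the local structure of the two smoothing windows genuinely overlaps.
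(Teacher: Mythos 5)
Your proof is correct, and it is more than the paper offers: the paper's ``proof'' of Lemma~\ref{lem:CT-unique-smooth-ancestor} is a one-line deferral to Lemma~2.6 of Archer et al., so your self-contained confluence argument is a genuine addition rather than a restatement. The skeleton is sound: leaf $\mathbf d$-entries are untouched by path smoothings (so no leaf ever becomes smoothable and $s$ is preserved), termination is immediate from the decreasing path length, terminal states are exactly the smooth structures by Definition~\ref{def:smooth}, and uniqueness follows from Newman's lemma once local confluence is checked. Your two supporting facts hold: adjacent smoothable vertices are impossible ($d_i=d_{i+1}=1$ with $i\geq 2$ gives $r_{i-1}+r_{i+2}=0$, and $d_1=d_2=1$ gives $r_3=0$), and smoothings at distance at least two commute. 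The one step you flag but do not carry out, the overlap at distance exactly two, does close: if $d_i=d_{i+2}=1$ then $r_i=r_{i-1}+r_{i+1}$ and $r_{i+2}=r_{i+1}+r_{i+3}$, so $d_{i+1}r_{i+1}=r_i+r_{i+2}=r_{i-1}+r_{i+3}+2r_{i+1}$, forcing $d_{i+1}\geq 3$; in either order the shared entry ends at $d_{i+1}-2=(r_{i-1}+r_{i+3})/r_{i+1}\geq 1$, all other $\mathbf d$-updates are disjoint, and the two deleted $r$-entries are the same, so both orders yield identical $(\mathbf d',\mathbf r')$. Two small points worth making explicit if you write this up: the degree-one smoothing at $v_1$ uses Definition~\ref{def:CT-smoothing-leaf-vertices} rather than Definition~\ref{def:CT-smoothing-path-vertices}, so the commutation check for $i=1$, $j=3$ should be run through those formulas (it works, since the old $d_3$ becomes the new $d_2$ unchanged), and the terminal $q$ satisfies $q\geq 1$ because structures on $\CT{1}{s}$ are vacuously smooth on the path side. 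Compared with the citation route, your argument buys a proof that does not depend on the reader reconstructing how the bident argument generalizes to $s>2$ leaves, at the cost of the rewriting-theoretic framing.
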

\begin{proof}
The proof is analogous to that of Lemma 2.6 in \cite{archer_2020} and so we omit it.
\end{proof}

\subsection{Subdividing Arithmetical Structures on Coconut Trees}

Next, we move on to the subdivision operation. Recall that a smoothing operation can remove a vertex of degree 2 or less if their associated $d$-value is $1$, which can only happen if the $r$-value of the vertex is equal to the sum of its neighbors. 
Then, a subdivision operation can be thought of as an inverse of the smoothing operation, as it always constructs a new vertex with an $\mathbf{r}$-labeling that is equal to the sum of its neighbors. 
Subdivisions provide the foundation for enumerating arithmetical structures on coconut trees, as they reduce the problem to counting the number of smooth arithmetical structures on coconut trees.

\begin{definition}\label{def:subdivision}
Let $p,s\in\N$. A \textbf{subdivision at the vertex $v_i$ for $i \in \{1, 2, \dots, p\}$} (or ``at position $i$'') is defined as an operation that takes an arithmetical structure $(\textbf{d}, \textbf{r})$ on $\CT{p}{s}$ and returns an arithmetical structure $(\textbf{d}', \textbf{r}')$ on $\CT{p+1}{s}$, where the components of the vectors $\textbf{d}'$, $\textbf{r}'$ are as follows: 

\noindent If $i>1$, then
\begin{align*}
r_j'    &=  \begin{cases}
            r_j             & \mbox{if }  j \in \{1, \dots, i - 1,\,\ell_1,\dots,\ell_s\} \\
            r_{j-1} + r_{j} & \mbox{if }  j = i \\
            r_{j-1}         & \mbox{if }  j \in \{i + 1, i + 2, \dots, p,p+1\}
            \end{cases}\\
            \intertext{and}
d_j'    &=  \begin{cases}
            d_j         & \mbox{if }  j \in \{1, \dots, i - 2,\ell_1,\dots,\ell_s\} \\
            d_j + 1     &\mbox{if }   j = i - 1 \\
            1           &\mbox{if }   j = i \\
            d_{j-1} + 1 & \mbox{if }  j = i + 1 \\
            d_{j-1}     & \mbox{if }  j \in \{i + 2, \dots, p,p+1\}.
            \end{cases}
\end{align*}

\noindent If $i=1$, then
\begin{align*}
r_j'    &=  \begin{cases}
            r_1         & \mbox{if }  j=1\\
            r_{j-1}     &\mbox{if }   j\in\{2,3,\ldots, p+1\}\\
            r_j         & \mbox{if }  j\in\{\ell_1,\dots,\ell_s\}\\
            \end{cases}\\
            \intertext{and}
d_j'    &=  \begin{cases}
            1           & \mbox{if }  j=1\\
            d_{j-1} + 1     &\mbox{if }   j = 2 \\
            d_{j-1}         & \mbox{if }  j \in \{3,4,\dots, p,p+1\} \\
            d_j&\mbox{if }j \in\{\ell_1,\dots,\ell_s\}.\\
            \end{cases}
\end{align*}
\end{definition}

In short, a subdivision at position $i$ takes the sum of $r_{i-1}$ and $r_{i}$ and assigns that value to a new vertex  $r_{i}'$.
We illustrate the definition through the following example.
\begin{example}
In Figure~\ref{fig:CT-3-2-subdivision}, we consider an arithmetical structure on $\CT{3}{2}$ and subdivide at position $2$ to get the structure on the right-hand side of the figure.
Note the result is an arithmetical structure on $\CT{4}{2}$. 
In Figure~\ref{fig:d5-subdivision-end-node}, we consider the same arithmetical structure on $\CT{3}{2}$, and do a subdivision at position $1$. This takes the beginning vertex with labeling $1$ and assigns the same value $1$ to a new vertex at the beginning of the path. Note the result is an arithmetical structure on $\CT{4}{2}$.
\begin{figure}
    \centering
    \begin{tikzpicture}
    \tikzstyle{every node}=[draw,circle,fill=white,minimum size=5pt,inner sep=2pt]
    \node[](v1) at (3,1){2};
    \node[](v2) at (3,-1){7};
    \node[](v3) at (0,0){1};
    \node[](v4) at (1,0){5};
    \node[](v5) at (2,0){14};
    \draw(v1)--(v5)--(v4)--(v3);
    \draw(v2)--(v5);
    \draw[thick, ->](3.75,0)--(5.25,0);
    \node[](v1) at (9,1){2};
    \node[](v2) at (9,-1){7};
    \node[](v3) at (6,0){1};
    \node[](v4) at (7,0){5};
    \node[](v5) at (6.5,.75){6};
    \node[](v6) at (8,0){14};
    \draw(v1)--(v6)--(v4)--(v5)--(v3);
    \draw(v2)--(v6);
    \end{tikzpicture}
    \caption{Subdividing at position $2$ on an arithmetical structure on $\CT{3}{2}$.}
    \label{fig:CT-3-2-subdivision}
\end{figure}
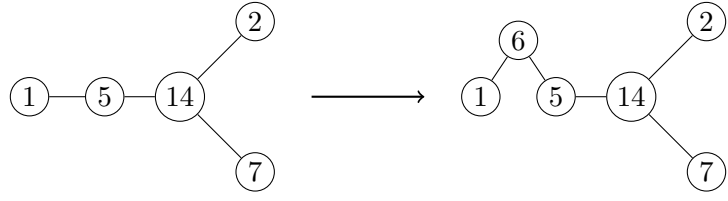

\begin{figure}
    \centering
    \begin{tikzpicture}
    \tikzstyle{every node}=[draw,circle,fill=white,minimum size=5pt,inner sep=2pt]
    \node[](v1) at (3,1){2};
    \node[](v2) at (3,-1){7};
    \node[](v3) at (0,0){1};
    \node[](v4) at (1,0){5};
    \node[](v5) at (2,0){14};
    \draw(v1)--(v5)--(v4)--(v3);
    \draw(v2)--(v5);
    \draw[thick, ->](3.75,0)--(5.25,0);
    \node[](v1) at (10,1){2};
    \node[](v2) at (10,-1){7};
    \node[](v3) at (9,0){14};
    \node[](v4) at (8,0){5};
    \node[](v5) at (7,0){1};
    \node[](v6) at (6.25,.75){1};
    \draw(v1)--(v3)--(v4)--(v5)--(v6);
    \draw(v2)--(v3);
    \end{tikzpicture}
    \caption{Subdividing at position $1$ on an arithmetical structure on $\CT{3}{2}$.}
    \label{fig:d5-subdivision-end-node}
\end{figure}
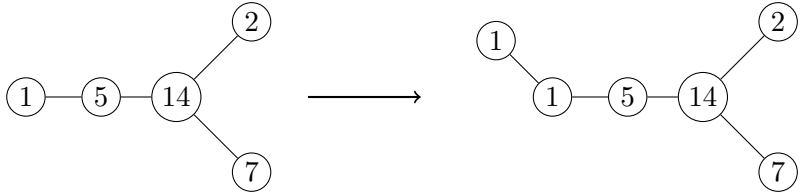
\end{example}

Next, we prove that subdivision results in a valid arithmetical structure.
\begin{proposition}
Let $p,s\in\N$, and let $(\mathbf{d},\mathbf{r})$ be an arithmetical structure on $\CT{p}{s}$. We have that $(\mathbf{d}',\mathbf{r}')$ resulting from subdividing vertex $i\in\{1,\dots,p\}$ is a valid arithmetical structure on $\CT{p+1}{s}$.
\end{proposition}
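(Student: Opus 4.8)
The plan is to verify the two defining properties of an arithmetical structure on $\CT{p+1}{s}$ directly: that the weight assigned to each vertex divides the sum of its neighbors' weights (equivalently $(\diag(\mathbf{d}') - A')\cdot\mathbf{r}' = \mathbf{0}$, where $A'$ is the adjacency matrix of $\CT{p+1}{s}$), and that $\gcd$ of the entries of $\mathbf{r}'$ equals $1$. First I would observe that a subdivision at position $i$ inserts one new vertex into the path while leaving the $s$ leaves attached to the rightmost path vertex, whose weight remains $r_p$ (now relabeled to position $p+1$); thus the underlying graph really is $\CT{p+1}{s}$, and it remains only to check the divisibility conditions. Because every vertex other than the new one and its two path-neighbors retains its neighborhood up to the index shift, their conditions are inherited verbatim from $(\mathbf{d},\mathbf{r})$; in particular each leaf keeps its pair $(d_{\ell_j}, r_{\ell_j})$ and is still adjacent only to a vertex of weight $r_p$, so $d_{\ell_j} r_{\ell_j} = r_p$ continues to hold.

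The core of the argument is the three affected vertices. By construction the new vertex $v_i'$ has $d_i' = 1$ and weight equal to the sum of its neighbors' weights (namely $r_{i-1} + r_i$ when $i>1$, and $r_1$ with its single neighbor when $i=1$), so its divisibility condition is immediate; this is exactly the sense in which subdivision inverts a smoothing at a degree-two vertex with $d$-value $1$, as in Proposition~\ref{prop:smoothing}. For each of the two neighbors I would use the following uniform observation: the neighbor's own weight is $w$ (it is $r_{i-1}$ for the left neighbor and $r_i$ for the right neighbor), its $d$-value increases by $1$ under the subdivision, and the shared neighbor's weight increases by exactly $w$ (the new vertex contributes $w$ more to the relevant neighbor sum than the vertex it displaced). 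Hence the required sum grows by $w$ on the left-hand side (the map $d \mapsto d+1$ multiplies $w$) and by $w$ on the right-hand side (the neighbor sum), so the condition is preserved. Adopting the convention $r_0 = 0$ lets the endpoint subcase $i-1 = 1$, and the degenerate left neighbor when $i=1$, be absorbed into the same computation, while the case $i=p$ is handled identically once one notes that the neighbor to the right of the new vertex is the leaf-bearing vertex of weight $r_p$.

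Finally, the gcd condition is automatic: every entry of $\mathbf{r}$ reappears as an entry of $\mathbf{r}'$, since the subdivision only adjoins the single extra value $r_{i-1}+r_i$ and relabels, so $\gcd(\mathbf{r}') \mid \gcd(\mathbf{r}) = 1$. I expect the only genuine obstacle to be bookkeeping rather than conceptual difficulty: correctly tracking the index shifts introduced by the relabeling and ensuring that the three boundary configurations ($i=1$, a generic interior $i$, and $i=p$) are each covered, given that divisibility at every unaffected vertex is inherited and the three affected vertices satisfy the clean relations above.
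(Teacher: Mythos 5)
Your proposal is correct and follows essentially the same route as the paper's proof: the gcd condition is automatic, the unaffected vertices inherit their divisibility conditions, the new vertex satisfies its condition by construction, and each of its two path-neighbors has its neighbor sum increased by exactly its own weight while its $d$-value increases by $1$ (the paper phrases this as $r_{i-1}\mid r_{i-2}+r_i$ implying $r_{i-1}\mid r_{i-2}+(r_{i-1}+r_i)$, which is the same computation). Your unified treatment of the $i=1$ endpoint via the convention $r_0=0$ is a minor streamlining of the paper's explicit two-case split, but the argument is the same.
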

\begin{proof}
Note that the gcd condition is unchanged by adding a new vertex, so we only need to check divisibility. There are two cases:
\begin{itemize}
\item For $i=1$, note that $r_1'=r_2'=r_1$, so $r_1'\mid r_2'$. Since $r_1\mid r_2$ and $r_3'=r_2$, then $r_2'\mid r_1'+r_3'$. For all other vertices, the divisibility condition does not change, so $(\mathbf{d}',\mathbf{r}')$ is an arithmetical structure on $\CT{p+1}{s}$.
\item For $i>1$, the only divisibility conditions that have changed are related to the new vertex and its two adjacent vertices. Note that $r_{i-1}\mid r_{i-2}+r_i$, hence $r_{i-1}\mid r_{i-2}+(r_{i-1}+r_i)$, which is equivalent to $r_{i-1}'\mid r_{i-2}'+r_i'$. Regarding $r_i'$, it divides the sum of the labels on its two neighbors as it is defined as exactly that sum, that is, $r_i'=r_{i-1}+r_{i}=r_{i-1}'+r_{i+1}'.$ Finally, a similar argument to the one for $r_{i-1}'$ shows that $r_{i+1}'$ (which equals $r_i$) divides the sum of its neighbors as the only change in the sum of its neighbors is the addition of $r_i$. So $(\mathbf{d}',\mathbf{r}')$ is a valid arithmetical structure on $\CT{p+1}{s}$.\qedhere
     \end{itemize}
\end{proof}

\section{Counting Arithmetical Structures on Coconut Trees}\label{sec:4}
We now focus our attention on enumerating arithmetical structures on coconut trees.
To begin, we define a subdivision sequence.

\begin{definition}
Let $(\textbf{d}^0, \textbf{r}^0)$ be an arithmetical structure on $\CT{p}{s}$, with $p,s\in\N$. A sequence of positive integers $\textbf{b} = (b_1, b_2, \dots, b_k)$ is a \textbf{valid subdivision sequence} if its entries satisfy $1 \leq b_{i} \leq p+i-1$ for $i \in \{1, 2, \dots, k\}$.

The arithmetical structure \textbf{Sub($(\textbf{d}^0, \textbf{r}^0)$, \textbf{b})} on $\CT{p}{s}$ is inductively defined as follows. Let $(\textbf{d}^i, \textbf{r}^i)$ be the arithmetical structure on $\CT{p+i}{s}$ obtained from the arithmetical structure $(\textbf{d}^{i-1}, \textbf{r}^{i-1})$ on $\CT{p+i-1}{s}$ by subdividing at the vertex $v_{b_i}$. Then, let $$\textbf{Sub($(\textbf{d}^0, \textbf{r}^0)$, \textbf{b})} \coloneqq (\textbf{d}^k, \textbf{r}^k) \in \CT{p+k}{s}.$$
\end{definition}

\begin{example}\label{ex:validsubdivision}
    Consider the structure $(\mathbf{d}^0,\mathbf{r}^0)\in \CT{8}{3}$ with $\mathbf{d}^0=(2,2,2,2,2,2,2,2,8,2,2)$ and $\mathbf{r}^0=(1,2,3,4,5,6,7,8,1,4,4).$ Let $\textbf{b}=(3,4,4,7)$. Note that $3\leq 8, 4\leq 9, 4\leq 10,$ and $7\leq 11$, hence $\textbf{b}$ is a valid subdivision sequence. We now construct $(\textbf{d}^i, \textbf{r}^i)$ for $i=1,2,3,4$,
    \[\begin{array}{lcl}
        \mathbf{d}^1=(2,3,\textbf{1},3,2,2,2,2,2,8,2,2)&\qquad  &\mathbf{r}^1=(1,2,\textbf{5},3,4,5,6,7,8,1,4,4)\\
        \mathbf{d}^2=(2,3,2,\textbf{1},4,2,2,2,2,2,8,2,2)&\qquad  &\mathbf{r}^2=(1,2,5,\textbf{8},3,4,5,6,7,8,1,4,4)\\
        \mathbf{d}^3=(2,3,3,\textbf{1},2,4,2,2,2,2,2,8,2,2)&\qquad  &\mathbf{r}^3=(1,2,5,\textbf{13},8,3,4,5,6,7,8,1,4,4)\\
        \mathbf{d}^4=(2,3,3,1,2,5,\textbf{1},3,2,2,2,2,8,2,2)&\qquad  &\mathbf{r}^4=(1,2,5,13,8,3,\textbf{7},4,5,6,7,8,1,4,4).\\
    \end{array}\]
    Therefore, $\textbf{Sub($(\textbf{d}^0, \textbf{r}^0)$, \textbf{b})}$ is the structure 
    \[\left((2,3,3,1,2,5,1,3,2,2,2,2,8,2,2), (1,2,5,13,8,3,7,4,5,6,7,8,1,4,4)\right)\in \CT{12}{3}.\]
\end{example}
Next we define the ballot numbers, a generalization of the Catalan numbers, where the $n$th Catalan number (for $n\geq 1$) is defined by \cite[\href{https://oeis.org/A000108}{A000108}]{OEIS}: 
\[C_{n}=\frac{1}{n+1}\binom{2n}{n}.\]
The ballot numbers, which we denote by $B(n, k)$, count the number of lattice paths from $(0, 0)$ to $(n, k)$ that do not cross above the line $y = x$. For more on ballot numbers see \cite{Carlitz:1972}. 
\begin{definition}\label{def:ballot-numbers}
For all $n, k \in \N \cup \{0\}$, with $0\leq k\leq n$, define the ballot numbers \cite[\href{https://oeis.org/A009766}{A009766}]{OEIS} by
\begin{align}\label{ballot numbers}
B(n, k)     &\coloneqq \frac{n - k + 1}{n + 1} \binom{n + k}{n}.
\end{align}
\end{definition}

\begin{example}\label{ex:ballots}
    Consider the  lattice path from $(0,0)$ to $(8,4)$ depicted in Figure \ref{fig:0085}.
    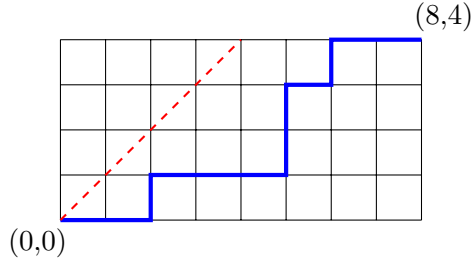
\begin{figure}
        \centering
        \begin{tikzpicture}[scale=0.6]
        \draw[step=1.0,black,thin] (0,0) grid (8,4);
        \draw[ultra thick,blue](0,0)--(2,0)--(2,1)--(5,1)--(5,3)--(6,3)--(6,4)--(8,4);
  \draw[dashed, red, thick](0,0)--(4,4);
  \node[] at (-.5,-.5){(0,0)};
  \node[] at (8.5,4.5){(8,4)};
\end{tikzpicture}
        \caption{A lattice path starting at $(0,0)$ and ending at $(8,4)$, {which is associated to the subdivision sequence $\textbf{b}=(3,4,4,7)$.} }
        \label{fig:0085}
    \end{figure}
    We now propose a subdivision sequence that is in bijection with these lattice paths. We want a vector with $k$ parts (subdivisions) so that the value of every part satisfies $1 \leq b_{i} \leq p+i-1$.
    \par We do so by the following process:
\begin{enumerate}
    \item First, identify every ``north'' step we take and place  the corresponding $x$-coordinate in a vector $(x_1,x_2,\dots,x_k)$. \item Then, subtract each value of the vector from $n+1$.
    \item Place the resulting values in ascending order in a new vector, $\mathbf{b}=(b_1,b_2,\dots,b_k)$.
\end{enumerate}

    \par Let's return to Figure~\ref{fig:0085}, and find its corresponding subdivision sequence. The 4 north steps come at the $x$-coordinates $2$, 5, 5, and 6. So we get the vector $(2,5,5,6)$. Then, we subtract $9$ from them to get $9-2=7,9-5=4,9-5=4$ and $9-6=3$ and arrange them in ascending order to get $\mathbf{b}=(3,4,4,7)$. This is a valid subdivision sequence because each $b_i$ is between 1 and $8+i-1$.
    Note that each lattice path takes exactly $k$ north steps. Due to the line $y=x$, there can never be more than 1 north step at $x=1$, 2 north steps at $x=2$, and so on. The values also can never go above $n$. These are the exact conditions for a subdivision sequence with $k$ subdivisions, and max value $n$.
    Hence, the number of vectors $\textbf{b}$ which are valid subdivision sequences with $k$ subdivisions and max value $n$ is the same as the number of lattice paths from $(0,0)$ to $(n,k)$ that do not go above $y=x$.
\end{example}

The next result is used in counting the number of arithmetical structures on $\CT{p}{s}$ as we find a bijection between the subdivision sequences on smaller coconut trees and non-decreasing sequences, similar to the method used in Archer et al. \cite{archer_2020}.

\begin{proposition}\label{prop:CT-counting-descendants}
Fix $s \geq 2$. Let $1 \leq i \leq p$ and let $(\textbf{d}, \textbf{r})$ be any smooth arithmetical structure on $\CT{i}{s}$. 
The number of arithmetical structures on $\CT{p}{s}$ that are descendants of $(\textbf{d}, \textbf{r})$ is $B(p - 1, p- i)$.
\end{proposition}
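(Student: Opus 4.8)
The plan is to set up an explicit bijection between the descendants of $(\mathbf{d},\mathbf{r})$ on $\CT{p}{s}$ and the non-decreasing valid subdivision sequences of length $k:=p-i$, and then to count the latter using the lattice-path correspondence of Example~\ref{ex:ballots}, which evaluates to $B(p-1,p-i)$.

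First I would reduce everything to subdivisions on the path. Since $(\mathbf{d},\mathbf{r})$ is smooth we have $d_{\ell_1},\dots,d_{\ell_s}\ge 2$, so by Lemma~\ref{lem:CT-unique-smooth-ancestor} no smoothing that carries a descendant back to $(\mathbf{d},\mathbf{r})$ can delete a leaf; every such smoothing is a path smoothing, and dually every descendant on $\CT{p}{s}$ equals $\textbf{Sub}((\mathbf{d},\mathbf{r}),\mathbf{b})$ for some valid subdivision sequence $\mathbf{b}$ of length $p-i$. Because $s\ge 2$, the central vertex $v_p$ has degree at least $3$ and is never eligible for smoothing, so it acts as a fixed right anchor and the entire argument takes place on the path $v_1,\dots,v_p$.

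The core of the proof is to show that $\mathbf{b}\mapsto\textbf{Sub}((\mathbf{d},\mathbf{r}),\mathbf{b})$, restricted to non-decreasing valid sequences, is a bijection onto the descendants. I would build the inverse by \emph{canonical smoothing}: at each step smooth the path vertex of \emph{largest index} whose $d$-value equals $1$, record its position, and at the end reverse the list of recorded positions. Smoothness of $(\mathbf{d},\mathbf{r})$ forces $d_1,\dots,d_{i-1}\ge 2$, so the only vertices that can ever carry $d=1$ are those inserted by subdivisions; hence canonical smoothing is well defined and terminates at $(\mathbf{d},\mathbf{r})$ after exactly $k$ steps. Both directions then follow from a single inductive lemma: if one subdivides along $b_1\le\cdots\le b_k$, the vertex created at the last step sits at position $b_k$ and is the rightmost vertex with $d=1$, so smoothing it returns precisely the structure built from $(b_1,\dots,b_{k-1})$; iterating recovers the whole sequence. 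The same bookkeeping shows conversely that canonical smoothing of an arbitrary descendant removes vertices at non-increasing positions, so the reversed record is automatically non-decreasing and re-subdividing along it reproduces the descendant. Together these give injectivity and surjectivity on non-decreasing sequences.

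The main obstacle is this inductive lemma, namely that after a non-decreasing run of subdivisions the last-inserted vertex is the rightmost one carrying $d=1$. The subtlety is that earlier-inserted vertices can be pushed to the right of the last one when positions repeat, so ``last inserted'' is not literally ``rightmost.'' The resolution is that the $k$-th subdivision at position $b_k$ increments the $d$-value of its right neighbor (Definition~\ref{def:subdivision}), while by the inductive hypothesis every path vertex at a position larger than $b_{k-1}$ already had $d\ge 2$ in the $(k-1)$-st structure; since $b_k\ge b_{k-1}$, this forces every vertex lying to the right of the new one up to $d\ge 2$, leaving the new vertex as the unique rightmost $d=1$ vertex. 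I would formalize this by induction on $k$, treating the position-$1$ (duplication) and position-$>1$ (mediant) branches of Definition~\ref{def:subdivision} separately. Once the bijection is in hand, a non-decreasing valid sequence of length $k=p-i$ with $1\le b_j\le i+j-1$ corresponds, exactly as in Example~\ref{ex:ballots}, to the lattice path whose north steps occur at the $x$-coordinates $p-b_k\le\cdots\le p-b_1$; the bound $b_j\le i+j-1$ is precisely the condition that this path stays weakly below $y=x$ on its way from $(0,0)$ to $(p-1,\,p-i)$, and there are $B(p-1,p-i)$ such paths, giving the stated count.
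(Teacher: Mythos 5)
Your proof is correct and takes essentially the intended route: the paper's own ``proof'' is a one-line deferral to Proposition 2.11 of Archer et al.~\cite{archer_2020}, whose argument is exactly this bijection between descendants and canonically ordered (non-decreasing) subdivision sequences, counted by the lattice paths of Example~\ref{ex:ballots}. Your canonical-smoothing normal form, the inductive claim that the last-inserted vertex is the rightmost non-central vertex with $d=1$, and the translation of $1\le b_j\le i+j-1$ into the ballot condition for paths from $(0,0)$ to $(p-1,p-i)$ supply precisely the details the paper omits.
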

\begin{proof}
The proof follows in a similar manner to Proposition 2.11 in \cite{archer_2020}.
\end{proof}

Next, we count the number of arithmetical structures on $\CT{p}{s}$ with non-smoothable leaf nodes, that is, when $d_{\ell_j} > 1$ for $j \in \{1,2, \dots, s\}$.

\begin{corollary}\label{cor:CT-number-of-AS-leaf-smooth}
Fix $p\geq 1,  s \geq 2$. The number of arithmetical structures on $\CT{p}{s}$ such that all of the leaf vertices {$v_{\ell_j}$ for $j \in\{1,2,\ldots, s\}$} cannot be smoothed is given by \[\sum_{i=1}^{p} B(p - 1, p-i)|\sarith{\CT{i}{s}}|.\]
\end{corollary}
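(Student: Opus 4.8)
The plan is to count arithmetical structures on $\CT{p}{s}$ whose leaf vertices are all non-smoothable by partitioning them according to their unique smooth ancestor. First I would invoke Lemma~\ref{lem:CT-unique-smooth-ancestor}: every arithmetical structure on $\CT{p}{s}$ with $d_{\ell_1},\dots,d_{\ell_s}\geq 2$ has a \emph{unique} smooth arithmetical structure on $\CT{q}{s}$ as an ancestor, for some $q$ with $1\leq q\leq p$. This is precisely the hypothesis of the corollary (the leaf vertices cannot be smoothed, i.e.\ $d_{\ell_j}>1$ for all $j$), so the set we wish to count is exactly the set of all such structures. The uniqueness of the smooth ancestor lets us partition this set into disjoint blocks indexed by pairs $(i,(\textbf{d},\textbf{r}))$, where $1\leq i\leq p$ and $(\textbf{d},\textbf{r})$ ranges over $\sarith{\CT{i}{s}}$, with the block attached to a given smooth structure consisting of exactly its descendants on $\CT{p}{s}$.

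The second step is to observe that smoothing and subdivision are inverse operations in the appropriate sense, so that the descendants on $\CT{p}{s}$ of a fixed smooth structure $(\textbf{d},\textbf{r})\in\sarith{\CT{i}{s}}$ are exactly the structures of the form $\textbf{Sub}((\textbf{d},\textbf{r}),\textbf{b})$ for valid subdivision sequences $\textbf{b}$ producing a coconut tree with $p$ path vertices. I would then apply Proposition~\ref{prop:CT-counting-descendants}, which states that the number of arithmetical structures on $\CT{p}{s}$ that are descendants of a fixed smooth structure on $\CT{i}{s}$ is the ballot number $B(p-1,p-i)$. Crucially, this count depends only on $i$ and $p$, not on the particular smooth structure chosen, so summing over all smooth structures on $\CT{i}{s}$ contributes $B(p-1,p-i)\,|\sarith{\CT{i}{s}}|$, and summing over $i$ from $1$ to $p$ yields the stated formula.

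The one point requiring care, and the main obstacle, is justifying that this partition is both exhaustive and disjoint, and that every descendant counted by Proposition~\ref{prop:CT-counting-descendants} indeed has non-smoothable leaves (so that the sum counts exactly the target set and nothing more or less). Disjointness is immediate from the uniqueness clause of Lemma~\ref{lem:CT-unique-smooth-ancestor}: no structure can be a descendant of two distinct smooth ancestors. Exhaustiveness follows because Lemma~\ref{lem:CT-unique-smooth-ancestor} guarantees every structure with $d_{\ell_j}\geq 2$ has some smooth ancestor on $\CT{q}{s}$ with $q\leq p$, placing it in the block indexed by $q$. For the leaf condition, I would note that the subdivision operations in Definition~\ref{def:subdivision} act only at path positions $i\in\{1,\dots,p\}$ and leave the leaf entries $d_{\ell_1},\dots,d_{\ell_s}$ unchanged; since a smooth structure already has $d_{\ell_j}\geq 2$ for all $j$ by Definition~\ref{def:smooth}, all of its descendants inherit non-smoothable leaves, confirming that the blocks land inside the target set. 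Combining the partition with the per-block count gives
\[
\sum_{i=1}^{p} B(p-1,p-i)\,|\sarith{\CT{i}{s}}|,
\]
as claimed.
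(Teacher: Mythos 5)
Your proposal is correct and follows essentially the same route as the paper: partition the structures with non-smoothable leaves by their unique smooth ancestor (Lemma~\ref{lem:CT-unique-smooth-ancestor}) and apply Proposition~\ref{prop:CT-counting-descendants} to each block. Your write-up is in fact more careful than the paper's, which leaves the disjointness/exhaustiveness of the partition and the preservation of the leaf condition under subdivision implicit.
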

\begin{proof}
Since Proposition~\ref{prop:CT-counting-descendants} shows that each smooth structure on $\CT{i}{s}$ has $B(p - 1, p-i)$ descendent arithmetical structures on $\CT{p}{s}$, we iterate through every smooth structure on $\CT{i}{s}$ from $i = 1$ to $i = p$ and count the number of descendent arithmetical structures to get the total count of structures on $\CT{p}{s}$ such that the leaf vertices cannot be smoothed. Thus, the number of arithmetical structures on $\CT{p}{s}$ with $d_{\ell_1}, \dots, d_{\ell_k} \geq 2$ is given by
\[
  \sum_{i=1}^{p} B(p - 1, p-i)|\sarith{\CT{i}{s}}|.\qedhere
\]
\end{proof}

\begin{example}
Referring back to Figure~\ref{fig:0085}, each lattice path corresponds to a unique descendant of a smooth arithmetical structure $(\textbf{d}, \textbf{r})$ on $\CT{5}{s}$. The descendant structures are on $\CT{9}{s}$. There are \[B(p-1,p-i)=B(9-1,9-5)=B(8,4)=275\] such descendants.
\end{example}

\begin{proposition}\label{prop:CT-counting-AS-fork-vertices}
Let $p \geq 1$. The number of arithmetical structures on $\CT{p}{1}$ such that {the leaf vertex $v_{\ell_1}$} cannot be smoothed is given by $|\arith{\mathcal{P}_{p+1}}| - |\arith{\mathcal{P}_{p}}| = C_{p} - C_{p-1}$, where $C_n$ is the $n$th Catalan number.
\end{proposition}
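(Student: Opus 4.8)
The plan is to exploit the identity $\CT{p}{1} = \mathcal{P}_{p+1}$, so that $|\arith{\CT{p}{1}}| = |\arith{\mathcal{P}_{p+1}}| = C_p$ by the path enumeration of Braun et al. Every arithmetical structure on $\CT{p}{1}$ lies in exactly one of two classes determined by its $d$-value at the unique leaf $v_{\ell_1}$: either $d_{\ell_1} = 1$, in which case the leaf can be smoothed, or $d_{\ell_1} \geq 2$, in which case it cannot. Since these two classes partition $\arith{\CT{p}{1}}$, the quantity we want is $C_p$ minus the number of structures with $d_{\ell_1} = 1$. Thus it suffices to prove that the number of arithmetical structures on $\CT{p}{1}$ with $d_{\ell_1} = 1$ equals $|\arith{\mathcal{P}_p}| = C_{p-1}$.

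To count these, I would exhibit an explicit bijection with $\arith{\mathcal{P}_p}$. In the forward direction, given a structure with $d_{\ell_1} = 1$, the divisibility condition at the leaf forces $r_{\ell_1} = r_p$; smoothing at $v_{\ell_1}$ via the degree-one case of Definition~\ref{def:CT-smoothing-leaf-vertices} deletes the leaf and lowers $d_p$ by one, yielding a structure on $\CT{p}{0} = \mathcal{P}_p$. In the reverse direction, given a structure $(r_1, \dots, r_p)$ on $\mathcal{P}_p$, I would append a leaf to $v_p$ carrying the value $r_{\ell_1} \coloneqq r_p$ together with $d_{\ell_1} = 1$. Because the leaf value is forced, deleting and re-appending the leaf recovers the original data, so the two operations are manifestly inverse to one another; in particular the correspondence is injective and surjective.

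The step requiring the most care is verifying that both maps actually land in the claimed sets of arithmetical structures. For the reverse map, the only divisibility condition that changes is the one at $v_p$: in $\mathcal{P}_p$ it reads $r_p \mid r_{p-1}$, whereas in $\CT{p}{1}$ it becomes $r_p \mid r_{p-1} + r_{\ell_1} = r_{p-1} + r_p$, which holds exactly because $r_p \mid r_{p-1}$, with the new value being $d_p + 1$; the leaf condition $r_{\ell_1} \mid r_p$ holds trivially with $d_{\ell_1} = 1$. The gcd is unchanged since $r_{\ell_1} = r_p$ is already among the existing labels. The forward map is justified by reading the same computation backwards and is in any case guaranteed valid by the smoothing operation of Definition~\ref{def:CT-smoothing-leaf-vertices}. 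Consequently the smoothable class has size $C_{p-1}$, the non-smoothable class has size $C_p - C_{p-1}$, and the latter is the desired count, matching $|\arith{\mathcal{P}_{p+1}}| - |\arith{\mathcal{P}_p}| = C_p - C_{p-1}$.
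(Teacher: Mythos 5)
Your proof is correct and follows essentially the same route as the paper: identify $\CT{p}{1}$ with $\mathcal{P}_{p+1}$ (total $C_p$), show the structures whose leaf can be smoothed are in bijection with $\arith{\mathcal{P}_p}$ via smoothing at $v_{\ell_1}$, and subtract. The only cosmetic difference is that the paper invokes Braun et al.'s fact that path endpoints carry $r$-value $1$ to identify the smoothable case, whereas you work directly from $d_{\ell_1}=1\iff r_{\ell_1}=r_p$ and check the inverse map explicitly, which is slightly more self-contained but not a different argument.
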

\begin{proof}
As proven by Braun et al. in~\cite[Lemma 1]{braun_2018}, 
arithmetical structures on a path satisfy that the $r$-values at the first and last vertex of the path must be $1$. 
Thus, if $v_{\ell_1}$ can be smoothed, then $r_{\ell_1}=r_p=1$. The arithmetical structures on $\CT{p}{1}=\mathcal{P}_{p+1}$ with $r_{\ell_1}=r_p=1$ are in bijection with arithmetical structures on $\mathcal{P}_p$ via smoothing at $v_{\ell_1}$. Hence, there are $|\arith{\mathcal{P}_{p}}| = C_{p-1}$ many of them. Therefore, there are $|\arith{\mathcal{P}_{p+1}}| - |\arith{\mathcal{P}_{p}}|=C_p-C_{p-1}$ many arithmetical structures on $\CT{p}{1}$ with the property that the end leaf vertex cannot be smoothed. 
\end{proof}

Note that for $s = 0$, there are no leaf vertices that can be smoothed. Thus, the number of arithmetical structures on $\CT{p}{0} = \mathcal{P}_{p}$ is $|\arith{\mathcal{P}_p}| = C_{p-1}$. 
Next, we prove the main result of the paper, providing a count for the number of arithmetical structures on $\CT{p}{s}$ in terms of smooth arithmetical structures.

\begin{theorem}{\label{thm:CT-number-of-AS}}
If $p\geq 1, s\geq 2$ then the number of arithmetical structures for the coconut tree $\CT{p}{s}$ is given by
\begin{align}\label{eq:recursion with A}
|\arith{\CT{p}{s}}|  &=  \sum_{j=0}^s \binom{s}{j} A(p+s - j, s - j)
\end{align}
where
\begin{align*}
    A(x, 0) &= C_{x-1} \mbox{ for $x\geq 1$,}\\
    A(x, 1) &= C_{x-1} - C_{x-2}\mbox{ for $x\geq 2$, and}\\
    A(x, y) &= \sum_{i=y+1}^{x} B(x-y-1, x-i)|\sarith{\CT{i-y}{y}}|\mbox{ for $x\geq 3, y\geq2$.}
\end{align*}
\end{theorem}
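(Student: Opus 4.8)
The plan is to partition $\arith{\CT{p}{s}}$ according to how many of the $s$ labeled leaf vertices are smoothable, where a leaf $v_{\ell_k}$ is smoothable exactly when $d_{\ell_k}=1$, equivalently (by the divisibility condition $d_{\ell_k}r_{\ell_k}=r_p$) when $r_{\ell_k}=r_p$. The first step is to recognize that the auxiliary quantity $A(x,y)$ counts the arithmetical structures on $\CT{x-y}{y}$ all of whose $y$ leaves are non-smoothable. Indeed, substituting $i'=i-y$ in the defining sum for $y\geq 2$ rewrites $A(p+y,y)=\sum_{i'=1}^{p}B(p-1,p-i')\,|\sarith{\CT{i'}{y}}|$, which is exactly the count from Corollary~\ref{cor:CT-number-of-AS-leaf-smooth} on $\CT{p}{y}$; the cases $y=1$ and $y=0$ agree with Proposition~\ref{prop:CT-counting-AS-fork-vertices} (giving $C_{p}-C_{p-1}$) and with $|\arith{\mathcal{P}_p}|=C_{p-1}$, respectively. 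Taking $x=p+s-j$ and $y=s-j$ gives $x-y=p$, so $A(p+s-j,s-j)$ is the number of structures on $\CT{p}{s-j}$ with no smoothable leaf.

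Next I would construct a bijection between $\arith{\CT{p}{s}}$ and the disjoint union, over $0\leq j\leq s$, of pairs consisting of a choice of $j$ of the $s$ leaf positions together with a non-smoothable-leaf structure on $\CT{p}{s-j}$. For the forward map, given a structure let $S\subseteq\{1,\dots,s\}$ be the set of smoothable leaf positions, set $j=|S|$, and smooth (remove) all leaves in $S$, relabeling the remaining leaves in their induced order. By the proposition following Definition~\ref{def:CT-smoothing-leaf-vertices} each such smoothing yields a valid structure on $\CT{p}{s-1}$, and the crucial point is that these smoothings are independent: removing a smoothable leaf (of value $r_p$) leaves every other $r$-value unchanged and only decreases $d_p$, so the quantity $d_{\ell_k}=r_p/r_{\ell_k}$ of each surviving leaf is unaffected. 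Hence after removing all of $S$ the $s-j$ surviving leaves are all non-smoothable, and we land in $\arith{\CT{p}{s-j}}$ with no smoothable leaf; the choice of $S$ contributes the binomial factor $\binom{s}{j}$.

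For the inverse, given $S$ with $|S|=j$ and a non-smoothable-leaf structure $(\mathbf{d}',\mathbf{r}')$ on $\CT{p}{s-j}$, I would reattach $j$ leaves of value $r_p'$ at the positions of $S$, placing the $s-j$ existing leaves, in order, in the complementary positions. Each new leaf gets $d=1$, the sum of the central vertex's neighbors grows by $j\,r_p'$ so $d_p$ increases by $j$ and divisibility at $v_p$ is preserved; all other divisibility conditions and the $\gcd=1$ condition are untouched. One checks that the two maps are mutually inverse: the reattached leaves are precisely the smoothable ones, and a smoothable leaf necessarily has value $r_p$, so reattaching recovers the original structure. Summing the sizes of the blocks then yields $|\arith{\CT{p}{s}}|=\sum_{j=0}^{s}\binom{s}{j}\,A(p+s-j,s-j)$, as claimed.

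I expect the main obstacle to be the careful verification of the bijection in the middle paragraph---in particular establishing that leaf-smoothings genuinely commute and are independent (so that ``number of smoothable leaves'' is well defined and the resulting $\CT{p}{s-j}$ structure does not depend on the order of removal), and that reattaching leaves of value $r_p$ both preserves the $\gcd=1$ condition and re-creates exactly the smoothable leaves and no others. By contrast, the identification of $A(x,y)$ with non-smoothable-leaf counts is routine bookkeeping once the index shift $i'=i-y$ and the three cases $y=0,1,\geq 2$ are matched to the path count, Proposition~\ref{prop:CT-counting-AS-fork-vertices}, and Corollary~\ref{cor:CT-number-of-AS-leaf-smooth}.
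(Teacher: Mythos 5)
Your proposal is correct and follows essentially the same route as the paper: partition $\arith{\CT{p}{s}}$ by the number $j$ of smoothable leaves (those with $r_{\ell_k}=r_p$), identify $A(p+s-j,s-j)$ with the non-smoothable-leaf counts coming from Corollary~\ref{cor:CT-number-of-AS-leaf-smooth}, Proposition~\ref{prop:CT-counting-AS-fork-vertices}, and the path count $C_{p-1}$, and account for the choice of which leaves carry the value $r_p$ with the factor $\binom{s}{j}$. The only difference is that you spell out the forward and inverse maps of the bijection (and the independence of the leaf smoothings) explicitly, whereas the paper describes only the reattachment direction and leaves the inverse implicit.
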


\begin{proof}
We begin by counting the number of arithmetical structures on $\CT{p}{s}$ by enumerating over the number of arithmetical structures on $\CT{p}{s}$ that have $j$ leaf vertices (among the $s$ vertices $v_{\ell_1}, v_{\ell_2},\dots,v_{\ell_s}$) that can be smoothed for $0 \leq j \leq  s$.

For $0\leq j \leq s-2$, by Corollary~\ref{cor:CT-number-of-AS-leaf-smooth}, the number of arithmetical structures on $\CT{p}{s-j}$ such that all $s-j$ leaf vertices cannot be smoothed is given by 
\begin{align*}
    &\qquad\sum_{k=1}^{p} B(p - 1, p-k)|\sarith{\CT{k}{s-j}}|\\
    &=\sum_{i=s-j+1}^{p+(s-j)} B(p-1,p-i+s-j)|\sarith{\CT{i-(s-j)}{s-j}}|\\
    &=A(p+s-j, s - j).
\end{align*} 
Given one such structure $(\textbf{d},\textbf{r})$ in $\CT{p}{s-j}$, we can construct $\binom{s}{j}$ structures in $\CT{p}{s}$ by choosing $j$ leaf vertices and setting their $r$-value to $r_p$, placing the labels $r_{\ell_1},r_{\ell_2},\ldots, r_{\ell_{s-j}}$ (none of which equals $r_p$) in the remaining $s-j$ leaf vertices in the order listed, and keeping the values of $r_1,r_2,\ldots, r_p$ intact in the first $p$ vertices.
Thus, the total number of arithmetical structures with $j$ vertices that can be smoothed is given by $\binom{s}{j} A(p+s-j, s - j)$. Taking the sum over the possible values of $j$ gives all but the top two terms in  \eqref{eq:recursion with A}.
We consider those values for $j$ next.

If $j=s-1$, then $\CT{p}{s-j}=\CT{p}{1}$. 
By Proposition~\ref{prop:CT-counting-AS-fork-vertices}, 
the number of arithmetical structures on $\CT{p}{1}$ such that the leaf vertex $v_{\ell_1}$ cannot be smoothed is $C_{p}-C_{p-1}=A(p+1,1)$.  Similar to the previous case, given one such structure $(\textbf{d},\textbf{r})$ in $\CT{p}{1}$, we can construct $\binom{s}{s-1}$ structures in $\CT{p}{s}$ by choosing $s-1$ leaf vertices and setting their $r$-value to $r_p$, placing the label $r_{\ell_1}$ (that is not equal to $r_p$) in the remaining leaf vertex, and keeping the values of $r_1,r_2,\ldots, r_p$ intact in the first $p$ vertices.
 
If $j = s$, then $\CT{p}{s-j}=\CT{p}{0}$. Hence, the number of arithmetical structures on $\CT{p}{0}$ with no leaf vertices that can be smoothed is simply the number of structures on $\CT{p}{0}=\mathcal{P}_p$, which is given by $C_{p-1}=A(p,0)$. Similar to previous cases, given one such structure, we can place the label $r_p$ on every one of the $s$ leaf vertices of $\CT{p}{s}$, to get a structure on $\CT{p}{s}$. This gives the final correspondence to get that 
\begin{align*}
|\arith{\CT{p}{s}}|  &=  \sum_{j=0}^s \binom{s}{j} A(p+s - j, s- j).\qedhere
\end{align*}
\end{proof}

We now confirm the count for the number of arithmetical structures on a coconut tree $\CT{p}{2}$ as given in \cite[Theorem 2.12]{archer_2020}.
\begin{corollary}
If $p\geq 2$, then 
\[|\arith{\CT{p}{2}}| = 2C_{p} - C_{p-1} + \sum_{i=3}^{p+2} B(p - 1, p+2 - i) |\sarith{\CT{i-2}{2}}|.\]
\end{corollary}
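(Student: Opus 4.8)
The plan is to derive this corollary as the special case $s=2$ of Theorem~\ref{thm:CT-number-of-AS}, which has just been proven, and then simplify the resulting sum using the definitions of $A(x,y)$. First I would instantiate the main formula at $s=2$, which gives
\[
|\arith{\CT{p}{2}}| = \sum_{j=0}^{2}\binom{2}{j} A(p+2-j,\,2-j) = \binom{2}{0}A(p+2,2) + \binom{2}{1}A(p+1,1) + \binom{2}{2}A(p,0).
\]
This is just a matter of writing out three terms, so there is essentially no obstacle in this step; the only care needed is to track the binomial coefficients $\binom{2}{0}=1$, $\binom{2}{1}=2$, $\binom{2}{2}=1$ correctly.

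Next I would substitute each $A$-value from the three defining cases in the theorem statement. For the $j=2$ term, $A(p,0)=C_{p-1}$ by the first case. For the $j=1$ term, $A(p+1,1)=C_{p}-C_{p-1}$ by the second case (valid since $p+1\geq 2$), and this is multiplied by $2$. For the $j=0$ term, $A(p+2,2)=\sum_{i=3}^{p+2} B(p+2-2-1,\,p+2-i)|\sarith{\CT{i-2}{2}}| = \sum_{i=3}^{p+2} B(p-1,\,p+2-i)|\sarith{\CT{i-2}{2}}|$ by the third case (valid since $x=p+2\geq 3$ and $y=2\geq 2$ when $p\geq 1$, and certainly when $p\geq 2$). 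Here I simply need to verify that the index $x-y-1$ collapses to $p-1$ and that $x-i=p+2-i$, matching the summand in the claimed formula exactly.

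Finally I would collect the three pieces and combine the Catalan terms: the sum is
\[
\big(C_{p}-C_{p-1}\big)\cdot 2 \;+\; C_{p-1} \;+\; \sum_{i=3}^{p+2} B(p-1,\,p+2-i)|\sarith{\CT{i-2}{2}}|,
\]
and combining $2C_p - 2C_{p-1} + C_{p-1} = 2C_p - C_{p-1}$ yields precisely the claimed expression. The only genuine thing to check is that all the hypotheses on $x$ in the three cases of the theorem are satisfied for the arguments appearing here, which the restriction $p\geq 2$ guarantees; beyond that the proof is a purely mechanical substitution and arithmetic simplification, so I do not anticipate any real difficulty.
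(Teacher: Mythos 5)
Your proposal is correct and follows exactly the paper's own proof: instantiate Theorem~\ref{thm:CT-number-of-AS} at $s=2$, substitute the three defining cases of $A(x,y)$, and combine $2(C_p - C_{p-1}) + C_{p-1} = 2C_p - C_{p-1}$. Nothing is missing.
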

\begin{proof}
By Theorem~\ref{thm:CT-number-of-AS} we have 
\begin{align*}
|\arith{\CT{p}{2}}|  &=  \sum_{j=0}^2 \binom{2}{j} A(p+2 - j, 2 - j), \\
&=\binom{2}{0} A(p+2, 2) + \binom{2}{1} A(p + 1, 1) + \binom{2}{2} A(p, 0) \\
&=\left(\sum_{i=3}^{p+2}B(p-1,p+2-i)|\sarith{\CT{i-2}{2}}|\right) + 2 (C_{p}-C_{p-1}) + C_{p-1}\\
&=2C_{p} - C_{p-1} + \sum_{i=3}^{p+2} B(p -1, p+2-i) \cdot |\sarith{\CT{i-2}{2}}|.\qedhere
\end{align*}
\end{proof}
The following is a derivation of the number of arithmetical structures on a star graph. Let $\mathcal{S}_{s}$ denote the star graph on $s+1$ vertices, which is equivalent to $\CT{1}{s}$. The next result shows that the number of arithmetical structures on a star graph can be computed using the number of smooth arithmetical structures on smaller star graphs.
\begin{corollary}
    If $p=1$ and $s\geq2$, then $|\arith{\CT{1}{s}}|=1+\displaystyle\sum_{j=0}^{s-2}\binom{s}j|\textbf{SArith}(\mathcal{S}_{s-j})|$.
\end{corollary}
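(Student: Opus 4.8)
The plan is to specialize Theorem~\ref{thm:CT-number-of-AS} to the case $p=1$ and then simplify the resulting terms using the fact that $\CT{1}{s}=\mathcal{S}_s$ is the star graph. Setting $p=1$ in Equation~\eqref{eq:recursion with A} gives
\[
|\arith{\CT{1}{s}}| = \sum_{j=0}^{s}\binom{s}{j}A(1+s-j,\,s-j).
\]
I would split this sum into the generic range $0\leq j\leq s-2$, where $A(1+s-j,s-j)$ uses the third (recursive) formula, and the two boundary cases $j=s-1$ and $j=s$, which use $A(x,1)$ and $A(x,0)$ respectively. The goal is to show the two boundary terms collapse to the single constant $1$, while the generic terms produce exactly $\sum_{j=0}^{s-2}\binom{s}{j}|\textbf{SArith}(\mathcal{S}_{s-j})|$.

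For the generic terms, I would observe that with $p=1$ we have $x=1+s-j$ and $y=s-j$, so $x=y+1$. The formula $A(x,y)=\sum_{i=y+1}^{x}B(x-y-1,x-i)|\sarith{\CT{i-y}{y}}|$ then has a sum with only the single index $i=x=y+1$, since the lower and upper limits coincide. At that index, $x-y-1=0$ and $x-i=0$, so the ballot number is $B(0,0)=1$, and the structure is $\CT{i-y}{y}=\CT{1}{s-j}=\mathcal{S}_{s-j}$. Hence each generic term reduces to $\binom{s}{j}|\sarith{\CT{1}{s-j}}|=\binom{s}{j}|\textbf{SArith}(\mathcal{S}_{s-j})|$, matching the claimed sum exactly.

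For the two boundary terms, the $j=s-1$ term is $\binom{s}{s-1}A(2,1)=s\,(C_1-C_0)=s(1-1)=0$, using $A(x,1)=C_{x-1}-C_{x-2}$ with $x=2$. The $j=s$ term is $\binom{s}{s}A(1,0)=C_0=1$, using $A(x,0)=C_{x-1}$ with $x=1$. Adding these contributions $0+1=1$ to the generic sum yields the stated identity. I expect the only real subtlety, rather than an obstacle, to be verifying that the recursive formula for $A(x,y)$ is legitimately applicable when $x=y+1$ (so that its defining sum is a single term) and double-checking the small-index Catalan values $C_0=C_1=1$ so the $j=s-1$ term genuinely vanishes; everything else is bookkeeping inherited directly from Theorem~\ref{thm:CT-number-of-AS}.
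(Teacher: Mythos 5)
Your proposal is correct and follows essentially the same route as the paper's own proof: specialize Theorem~\ref{thm:CT-number-of-AS} to $p=1$, peel off the $j=s$ and $j=s-1$ terms (which give $A(1,0)=C_0=1$ and $\binom{s}{s-1}A(2,1)=s(C_1-C_0)=0$), and note that for $0\le j\le s-2$ the recursive formula for $A(s+1-j,s-j)$ collapses to the single term $B(0,0)\,|\sarith{\CT{1}{s-j}}|=|\sarith{\mathcal{S}_{s-j}}|$. The applicability worry you flag is harmless, since for $j\le s-2$ one has $x=s+1-j\ge 3$ and $y=s-j\ge 2$, exactly the stated range of the recursive case.
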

\begin{proof}
Using Theorem~\ref{thm:CT-number-of-AS}, we have
\begin{align*}
    |\arith{\CT{1}{s}}|&=\sum_{j=0}^{s}\binom{s}{j} A(s+1-j,s-j)\\
    &\hspace{-1in}=\binom{s}{s}A(1,0)+\binom{s}{s-1}A(2,1)+\sum_{j=0}^{s-2}\binom{s}{j}A(s+1-j,s-j)\\
    &\hspace{-1in}=1\cdot1+s\cdot0\\
    &\hspace{-.75in}+\sum_{j=0}^{s-2}\binom{s}{j}\left(\sum_{i=s+1-j}^{s+1-j}B((i-(s-j)-1,(s+1-j)-i)|\textbf{SArith}(\CT{i-(s-j)}{s-j})|\right)\\
    &\hspace{-1in}=1+\sum_{j=0}^{j=s-2}\binom{s}jB(0,0)|\textbf{SArith}(\CT{1}{s-j})|\\
    &\hspace{-1in}=1+\sum_{j=0}^{j=s-2}\binom{s}j|\textbf{SArith}(\CT{1}{s-j})|. \qedhere
\end{align*}
\end{proof}

\subsection{Counting Smooth Arithmetical Structures}
In this subsection, we give an explicit construction of smooth arithmetical structures on coconut tree graphs and give some enumeration results. First, we define Euclidean chains, which are sequences that capture the construction of arithmetical structures on paths. As an important notation convention, in this section when we write $c=a\mod b$ we take $c$ to be the smallest nonnegative representative in the class of $a\mod b$. In particular, $c$ is always less than $b$.

\begin{definition}
A \textbf{Euclidean chain} is a sequence $\{x_i\}_{i\in \mathbb{N}}$ defined as follows: $x_1,x_2\in \mathbb{N}$ and for all $i\geq 2$,
\begin{align*}
x_{i+1}     &=  \begin{cases}
                -x_{i-1} \mod{x_i}   &  \mbox{if } x_i \neq 0, 1 \\
                0                                                                   &   \text{otherwise.}
                \end{cases}
\end{align*}
 Then, the \textbf{Euclidean chain function} $F: \N \times (\N \cup \{0\}) \to \N$ is defined as the function $F(x_1, x_2) = k$ where $k$ is the largest value of $i$ such that $x_i$ is nonzero, or the number of positive terms in the sequence $\{x_i\}$. Since $x_{i+1}=-x_{i-1} \mod{x_i}<x_i$, then the sequence eventually terminates and $F$ is well-defined.
\end{definition}

\begin{example}\label{ex:chain long}
We calculate $F(13, 60)$. Take $x_1 = 13$ and $x_2 = 60$. Then, let $x_3$ be the least residue of $-13 \pmod{60} \equiv 47 \pmod{60}$, which gives us $47 = x_3$. Likewise, let us calculate the remaining $x_i$ for $i=3,\ldots,9$, which are found as follows:
\begin{center}

$\begin{array}{llcllll}
-13 &\pmod{60} & \equiv  & 47  &\pmod{60} & \Rightarrow & x_3 = 47\\
-60 &\pmod{47} & \equiv  & 34  &\pmod{47} & \Rightarrow & x_4 = 34\\
-47 &\pmod{34} & \equiv  & 21  &\pmod{34} & \Rightarrow & x_5 = 21\\
-34 &\pmod{21} & \equiv  & 8   &\pmod{21} & \Rightarrow & x_6 = 8\\
-21 &\pmod{8}  & \equiv  & 3   &\pmod{8}  & \Rightarrow & x_7 = 3\\
-8  &\pmod{3}  & \equiv  & 1   &\pmod{3}  & \Rightarrow & x_8 = 1\\
-3  &\pmod{1}  & \equiv  & 0   &\pmod{1}  & \Rightarrow & x_9 = 0.\\
\end{array}$
\end{center}
Hence, $F(13, 60) = 8$ as $x_8$ is the last $x_i$ with a non-zero value. 
\end{example}
Next, we give the following proposition from Archer et al. \cite{archer_2020} that greatly simplifies calculations involving the Euclidean chain function $F$.
\begin{proposition}[{\cite[Lemma 3.2]{archer_2020}}]\label{prop:CT-properties-of-F}
Let $x \in \N$ and $y, k \in \N \cup \{0\}$. Then,
\begin{enumerate}
    \item $F(x + ky, y) = F(x, y)$
    \item $F(x, kx + y) = F(x, y) + k$.
\end{enumerate}
\end{proposition}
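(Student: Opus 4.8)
The statement to prove is Proposition~\ref{prop:CT-properties-of-F}, which records two functional equations for the Euclidean chain function $F$. Since the proposition is quoted directly from \cite[Lemma 3.2]{archer_2020}, the natural plan is to reconstruct the short proof from the recursive definition of the Euclidean chain, verifying each identity by tracking how the initial pair $(x_1,x_2)$ propagates through the recursion $x_{i+1} = -x_{i-1} \bmod{x_i}$.

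The plan is to handle the two identities separately, as each reflects a different elementary property of the modular recursion. For part (1), I would start the chain at $(x_1,x_2) = (x+ky,\,y)$ and compute the first genuine step $x_3 = -(x+ky)\bmod y$. Since $ky \equiv 0 \pmod y$, we have $-(x+ky)\equiv -x \pmod y$, so $x_3 = -x \bmod y$ — exactly the same value of $x_3$ one obtains by starting the chain at $(x,y)$. From $i\geq 3$ onward the two chains have identical consecutive pairs $(x_2,x_3)=(y, -x\bmod y)$, so they coincide term-for-term and in particular have the same number of positive terms; thus $F(x+ky,y)=F(x,y)$. (I would note the boundary cases $y=0$ and $y=1$, where $F$ is forced by the ``otherwise'' branch, so both sides agree trivially.)

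For part (2), I would start the chain at $(x_1,x_2)=(x,\,kx+y)$ and observe that the chain first ``counts down'' through $k$ multiples of $x$ before reaching the pair $(x,y)$. Concretely, $x_3 = -x \bmod (kx+y)$; since $0 \le y < x$ would not in general hold I instead track the reduction directly: the key point is that repeatedly applying the recursion from $(x, kx+y)$ reproduces the sequence $x, (k-1)x+y',\dots$ in such a way that after exactly $k$ additional positive terms the chain synchronizes with the chain started at $(x,y)$, giving $F(x,kx+y)=F(x,y)+k$. The cleanest route is induction on $k$: the base case $k=0$ is trivial, and the inductive step reduces $F(x,(k+1)x+y)$ to $F(x,kx+y)+1$ by showing a single extra positive term is prepended, then applying the hypothesis. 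I would verify the one-step relation $F(x, x+y)=F(x,y)+1$ by direct inspection of the first recursion step.

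The main obstacle I anticipate is the bookkeeping in part (2): one must be careful about the direction of the modular reduction (the recursion uses $-x_{i-1}\bmod x_i$, not the ordinary Euclidean remainder) and about the convention, stated in the excerpt, that residues are taken as the smallest nonnegative representative so that each term is strictly less than its predecessor. Getting the indices to line up correctly — confirming that exactly $k$ positive terms are inserted and that the tail of the chain is genuinely unchanged — is where an error is most likely to creep in. Because the result is cited from prior work, I would keep the argument brief, emphasizing the single-step identities and then invoking induction, rather than expanding every intermediate reduction.
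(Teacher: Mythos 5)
Your reconstruction is correct, but note that the paper itself supplies no proof of this proposition at all: it is imported verbatim by citation to Archer et al.\ (Lemma 3.2 of \cite{archer_2020}), so there is no in-paper argument to compare against. Your sketch fills in exactly what the authors delegate to the reference, and it is sound. The load-bearing observation in both parts is that the recursion $x_{i+1}=-x_{i-1}\bmod x_i$ depends only on the two preceding terms, so once two chains share a consecutive pair they coincide from that point on; for part (1) the pair $(y,\,-x\bmod y)$ is reached immediately, and for part (2) the chain started at $(x,\,kx+y)$ descends through $(k-1)x+y,\,(k-2)x+y,\dots$ and synchronizes with the $(x,y)$ chain after exactly $k$ extra positive terms. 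Your worry in part (2) about ``$0\le y<x$ not holding in general'' is a non-issue: the single-step computation $-x\bmod(kx+y)=(k-1)x+y$ requires only $(k-1)x+y>0$, with no comparison between $x$ and $y$; and your fallback of inducting on $k$ via the one-step identity $F(x,x+y)=F(x,y)+1$ is clean and handles this anyway. The only bookkeeping worth making explicit if you write this out in full is the degenerate branches ($x_i\in\{0,1\}$ forcing termination), e.g.\ checking $F(x,x)=2=F(x,0)+1$ and $F(x,kx+1)=k+2$, which you already flag.
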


\begin{corollary}\label{cor:CT-F-xdivy}
Let $x, y \in \N$ such that $x \mid y$. Then, $F(x, y) = 1 + \frac{y}{x}$.
\end{corollary}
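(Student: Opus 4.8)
The plan is to prove Corollary~\ref{cor:CT-F-xdivy} directly from Proposition~\ref{prop:CT-properties-of-F}, which is the only tool needed. The key observation is that $x \mid y$ means we can write $y = kx + 0$ where $k = y/x$, so the second term in property (2) of the proposition is $F(x, 0)$ rather than $F(x, y)$ with a nonzero remainder. First I would establish the base value $F(x, 0)$ from the definition of the Euclidean chain function, then apply property (2) with the remainder equal to zero.

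Concretely, I would compute $F(x, 0)$ by unwinding the definition of the Euclidean chain. Taking $x_1 = x$ and $x_2 = 0$, the function counts the number of positive terms in the sequence. Since $x_2 = 0$, the only positive term is $x_1 = x$ (recall $x \in \N$, so $x \geq 1$), giving $F(x, 0) = 1$. I should double-check the edge behavior: the recurrence sets $x_{i+1} = 0$ once $x_i \in \{0, 1\}$, and here $x_2 = 0$ already, so no further positive terms appear, confirming $F(x,0) = 1$.

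With the base value in hand, I would write $y = (y/x)\cdot x + 0$, set $k = y/x \in \N$, and apply property (2) of Proposition~\ref{prop:CT-properties-of-F} in the form $F(x, kx + 0) = F(x, 0) + k$. Substituting $F(x,0) = 1$ and $k = y/x$ yields
\[
F(x, y) = 1 + \frac{y}{x},
\]
which is exactly the claim. The whole argument is a one-line application of the proposition once the base case is settled.

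I do not expect any serious obstacle here; the only subtlety is correctly evaluating $F(x, 0)$ against the definition, since property (2) requires the second argument of the right-hand side to be a genuine least residue (here forced to be $0$ because $x \mid y$). The argument hinges on $x/x$ or more precisely on $y/x$ being a nonnegative integer so that $k$ lies in $\N \cup \{0\}$ as required by the hypotheses of the proposition, which holds precisely because $x \mid y$. Everything else is immediate substitution.
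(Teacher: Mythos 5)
Your proof is correct and follows essentially the same route as the paper: both write $y = kx + 0$ with $k = y/x$, apply property (2) of Proposition~\ref{prop:CT-properties-of-F}, and use $F(x,0)=1$. The only difference is that you explicitly unwind the definition to justify $F(x,0)=1$, where the paper simply cites it as holding by definition.
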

\begin{proof}
Since $x\mid y$, there exists some $n \in \N$ such that $nx = y$. Then, note that $F(x, y) = F(x, nx + 0) = F(x, 0) + n$, where we used property (2) of Proposition~\ref{prop:CT-properties-of-F}. Since $nx = y$ yields $n = \frac{y}{x}$ and by definition $F(x, 0) = 1$, we have $F(x, y) = 1 + \frac{y}{x}$.
\end{proof}

\begin{corollary}\label{cor:F with 1}
If $y \in \N\cup\{0\}$, then $F(1, y) = y + 1$ and if $x\in \N$, then $F(x,1)=2$.
\end{corollary}
\begin{proof}
Since $1 \mid y$, we use Corollary~\ref{cor:CT-F-xdivy} to arrive $F(1, y) = 1 + \frac{y}{1} = y + 1$. For the second statement, if a Euclidean chain starts with $\{x,1\}$ then, by definition, the next term is $0$ and the sequence is $\{x,1,0,0,\ldots\}$. Thus, $F(x,1)=2$.
\end{proof}

In what follows, we begin by assuming that we have assigned labels to the leaf vertices such that they divide the assigned label on the center vertex of a coconut tree graph.
Our main result establishes that given this initial labeling, we can construct a unique smooth arithmetical structure on a coconut tree whose path length is one less than the Euclidean chain evaluated at the sum of the leaf labels and the label of the center vertex (Corollary~\ref{cor:length of path}). Before we establish this result, we illustrate the procedure.

\begin{example}\label{ex:smooth construction}
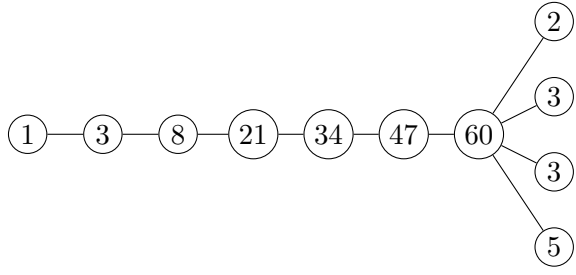
\begin{figure}
    \centering
    \begin{tikzpicture}
    \tikzstyle{every node}=[draw,circle,fill=white,minimum size=5pt,inner sep=2pt]
    \node[](v0) at (7,1.5){2};
    \node[](v1) at (7,.5){3};
    \node[](v2) at (7,-.5){3};
    \node[](v3) at (7,-1.5){5};
    \node[](v4) at (0,0){1};
    \node[](v5) at (1,0){3};
    \node[](v6) at (2,0){8};
    \node[](v7) at (3,0){21};
    \node[](v8) at (4,0){34};
    \node[](v9) at (5,0){47};
    \node[](v10) at (6,0){60};
    \draw(v0)--(v10);
    \draw(v1)--(v10);
    \draw(v2)--(v10);
    \draw(v3)--(v10);
    \draw(v4)--(v5)--(v6)--(v7)--(v8)--(v9)--(v10);
    \end{tikzpicture}
    \caption{The constructed Coconut Tree $\CT{7}{4}$.}
    \label{fig:CT-construction}
\end{figure}

Let $(c,a_1, a_2, a_3, a_4) = (60, 2, 3, 3, 5)$. Note that $\sum_{j=1}^4 a_j=13$ and by Example \ref{ex:chain long}, we have $F(13,60)=8$, hence we will construct a smooth arithmetical structure on $\CT{8-1}{4}$ with $r_7=60$ and $r_{\ell_j}=a_j$ for $j=1,2,3,4$. To determine $r_1,r_2,\ldots, r_6$, we use the positive entries of the Euclidean chain that starts $\{13,60\}$ in reverse order. By Example \ref{ex:chain long}, this chain is $\{13,60,47,34,21,8,3,1,0,0,\ldots\}$. Hence, we let 
\[\textbf{r}=(1,3,8,21,34,47,60,2,3,3,5),\]
which is illustrated in Figure~\ref{fig:CT-construction}.
\end{example}
\begin{proposition}\label{prop:CT-construction-as-unique}
For every tuple $(c, a_1, a_2, \dots, a_s) \in \N^{s+1}$ such that $\gcd(c,a_1, a_2, \dots, a_s) = 1$, $a_i \mid c$, and $a_i < c$ for $i \in \{1, 2, \dots, s\}$, there is a unique $p \geq 1$ such that there is a smooth arithmetical structure $(\textbf{d}, \textbf{r})$ on $\CT{p}{s}$ with $r_{\ell_i} = a_i$ for $i \in \{1, 2, \dots, s\}$ and ${r_p} = c$. Moreover, the arithmetical structure $(\mathbf{d},\mathbf{r})$ is also unique.
\end{proposition}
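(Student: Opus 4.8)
The plan is to make precise the construction illustrated in Example~\ref{ex:smooth construction} and to show both that it produces a smooth arithmetical structure (existence) and that it is the only one (uniqueness), with the Euclidean chain serving as the backbone of both halves. Write $\sigma = \sum_{j=1}^s a_j$, let $\{x_i\}$ be the Euclidean chain with $(x_1, x_2) = (\sigma, c)$, and let $F = F(\sigma, c)$ be its number of positive terms. I will set $p = F - 1$, assign $r_p = c$ and $r_{\ell_j} = a_j$, and put $r_i = x_{p+2-i}$ for $1 \le i \le p$, so that the path reads off the positive chain terms $x_F, x_{F-1}, \dots, x_2$ in reverse. Since some $a_j < c$ forces $c \ge 2$, we have $F \ge 2$ and hence $p \ge 1$.

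For existence I would check the divisibility conditions vertex by vertex, each time reading them off the relation $x_i \mid x_{i-1} + x_{i+1}$ built into the chain: at an interior path vertex it is immediate; at $v_1$ it follows because the term after the last positive one vanishes, so $r_1 = x_F \mid x_{F-1} = r_2$; at the central vertex $v_p$ the relation $x_2 \mid x_1 + x_3$ is exactly $c \mid \sigma + r_{p-1}$, i.e. $c$ divides the sum of its neighbors; and at each leaf $a_j \mid c$ is given. Because the chain strictly decreases while $x_i \ge 2$ and reaches $1$ (if at all) only as its final positive term, the values $x_2 > x_3 > \dots > x_F$ are strictly decreasing, so $r_1 < r_2 < \dots < r_p$; together with $a_j < c$ this gives smoothness by Lemma~\ref{lem:CT-smooth-equivalency}. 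Finally the gcd of all labels divides $\gcd(c, a_1, \dots, a_s) = 1$, so $(\mathbf{d}, \mathbf{r})$ is a genuine smooth arithmetical structure, with $\mathbf{d}$ the vector forced by $\mathbf{r}$.

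For uniqueness I would run the computation backwards from an arbitrary smooth structure $(\mathbf{d}, \mathbf{r})$ on some $\CT{p}{s}$ with $r_{\ell_j} = a_j$ and $r_p = c$. Set $y_1 = \sigma$ and $y_i = r_{p-i+2}$ for $2 \le i \le p+1$, so $y_2 = c$ and $y_{p+1} = r_1$. By Lemma~\ref{lem:CT-smooth-equivalency} the path is strictly increasing and every $a_j < c$; consequently at the central vertex (with $\sigma$ in the role of $y_1$) and at each interior path vertex the divisibility condition together with the strict inequality $0 < y_{i+1} < y_i$ pins $y_{i+1}$ down as the least nonnegative residue of $-y_{i-1}$ modulo $y_i$, while $r_1 \mid r_2$ at $v_1$ forces the next chain term to vanish. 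Hence $\{y_i\}$ coincides with the Euclidean chain started at $(\sigma, c)$, its positive terms are exactly $y_1, \dots, y_{p+1}$, so $p+1 = F$ and $p = F-1$ is uniquely determined; the entries $r_i$ are then the prescribed chain terms, and $\mathbf{d}$ is determined by $\mathbf{r}$.

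The main obstacle I anticipate is bookkeeping the conventions rather than any deep idea. I must ensure that the ``$\bmod$'' in the chain (the least nonnegative representative) really matches the residues produced by the structure, which requires the strict inequalities from smoothness to rule out zero residues in the interior; and I must handle the boundary case $p = 1$ (the star $\CT{1}{s}$), where $c \mid \sigma$ makes $x_3 = 0$ and there is no interior path condition to verify. Once the residue conventions and this degenerate case are settled, both existence and uniqueness reduce to the determinism of the Euclidean chain.
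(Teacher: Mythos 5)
Your proposal is correct and follows essentially the same route as the paper: build the $\mathbf r$-vector from the positive terms of a Euclidean chain, verify divisibility vertex by vertex, get smoothness from the strict decrease of the chain via Lemma~\ref{lem:CT-smooth-equivalency}, and obtain uniqueness by showing the smooth conditions force each $r_{i}$ to be the least nonnegative residue $-r_{i+2}\bmod r_{i+1}$. The only difference is cosmetic indexing — you seed the chain at $(\sigma,c)$ and set $p=F(\sigma,c)-1$, whereas the paper seeds it at $\bigl(c,\,(-\sigma)\bmod c\bigr)$ and takes $p$ to be its number of positive terms; these agree by the identity recorded in Corollary~\ref{cor:length of path}.
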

\begin{proof}
Let $(c,a_1, a_2, \dots, a_s) \in \N^{s+1}$ such that $\gcd(c,a_1, a_2, \dots, a_k) = 1$, $a_i \mid c$ and $c > a_i$ for $i \in \{1, 2, \dots, s\}$.
Let $r_{\ell_i} = a_i$ for $i \in \{1, 2, \dots, s\}$. Define the sequence $\{x_i\}_{i=1}^{\infty}$ as follows:
\
\begin{align*}
x_i     &=  \begin{cases}
              c                             &   \text{ if } i = 1, \\
             \left(-\sum_{j=1}^s a_j\right) \mod{c}
             &   \text{ if } i = 2, \\
            -x_{i-2} \mod{x_{i-1}}         &   \text{ if } i\geq 3  \text{ and } x_{i-1}, x_{i-2} \neq 0,1 \\
            0                               &   \text{otherwise},
            \end{cases}
\end{align*}
Hence, $\{x_i\}_{i=1}^\infty$ form a decreasing sequence and moreover, it is a Euclidean chain.
Let $p=F\left(c, \left(-\sum_{j=1}^s a_j\right) \mod{c}\right)$, that is, let $p$ be the number of nonzero entries of  $\{x_i\}_{i=1}^\infty$. Let {$r_{p+1-i} = x_i$}
for $i \in \{1, 2, \dots,p\}$ such that $x_i \neq 0$. Then, we verify the resulting $\textbf{r}$-vector, namely $\textbf{r}=(r_1,r_2,\ldots,r_p,a_1,a_2,\ldots,a_s)$, is an arithmetical structure.
By assumption, {$r_{\ell_i}=a_i$ and since $a_i\mid c=r_p$}, then $r_{\ell_i} \mid {r_p}$. For $v_p$, note that $r_{p-1}=\left(-\sum_{j=1}^s a_j\right) \mod{r_p}$, hence $r_p \left| \left(\sum_{j=1}^s r_{\ell_j}\right)+r_{p-1}.\right.$ For the vertices $v_2,\ldots,v_{p-1}$, we constructed the $\textbf{r}$ labels such that $r_{i+1}=-r_{i-1} \mod{r_i}$ hence $r_i\mid (r_{i-1}+r_{i+1})$. Finally, for $v_1$, since $r_1=x_p$, it is the last nonzero entry of the Euclidean chain $\{x_i\}_{i=1}^\infty$. Thus,  $x_{p+1}=0$, which means that either $x_p=1$ or $-x_{p-1}\mod{x_p}=0$. In both cases, we get that $x_p\mid x_{p-1}$, that is, $r_1\mid r_2$.
Hence, the constructed $\textbf{r}$-vector determines an arithmetical structure on $\CT{p}{s}$.

Now, we show this arithmetical structure on $\CT{p}{s}$ is smooth. Note that by assumption $r_{\ell_i} < r_p$ and $\gcd(r_p,r_{\ell_1}, r_{\ell_2}, \dots, r_{\ell_s}) = 1$ . Since the entries $(r_p,r_{p-1},\ldots, r_1)$ are consecutive entries of the Euclidean chain $\{x_i\}_{i=1}^\infty$, then $r_p>r_{p-1}>\cdots >r_1$. By Lemma~\ref{lem:CT-smooth-equivalency}, we get that $\mathbf{r}$ is a smooth arithmetical structure.
The uniqueness of $p$ (and of $\mathbf{r}$) comes from the fact that in order to have $r_p>r_{p-1}>\cdots >r_1$ and $r_i\mid (r_{i-1}+r_{i+1})$ for $i=1,2,\ldots, p$ (denoting $r_0=0$ and $r_{p+1}=\sum_{j=1}^s r_{\ell_j}$), we must have $r_{i}=-r_{i+2}\mod r_{i+1}$, hence the values of $\mathbf{r}$ must be the entries of the Euclidean chain $\{x_i\}_{i=1}^\infty$.
\end{proof}

The next corollary reduces the problem of bounding the number of smooth arithmetical structures that can be generated from only the leaf vertices to studying the Euclidean chain function $F$.

\begin{corollary}\label{cor:length of path}
Let $(c, a_1, a_2, \dots, a_s) \in \N^{s+1}$ such that $\gcd(c,a_1, a_2, \dots, a_s) = 1$, $a_i \mid c$, and $a_i<c$ for all $i\in\{1, 2, \dots, s\}$. 
The smooth arithmetical structure on the coconut tree $\CT{p}{s}$ constructed from Proposition~\ref{prop:CT-construction-as-unique} satisfies \[p = F\left(\left(\sum_{j=1}^s a_j\right),c\right) - 1.\]
\end{corollary}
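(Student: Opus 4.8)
The plan is to read off the value of $p$ directly from the construction in Proposition~\ref{prop:CT-construction-as-unique} and then reconcile the two arguments of $F$ using Proposition~\ref{prop:CT-properties-of-F}. In the proof of Proposition~\ref{prop:CT-construction-as-unique}, the path length $p$ was defined to be the number of nonzero entries of the Euclidean chain $\{x_i\}_{i=1}^\infty$ with $x_1 = c$ and $x_2 = \left(-\sum_{j=1}^s a_j\right) \bmod c$; that is, $p = F\!\left(c, \left(-\sum_{j=1}^s a_j\right) \bmod c\right)$. So the entire content of the corollary is to show that this quantity equals $F\!\left(\sum_{j=1}^s a_j,\, c\right) - 1$.

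First I would set $S = \sum_{j=1}^s a_j$ to lighten notation, so the goal becomes $F\!\left(c, (-S) \bmod c\right) = F(S, c) - 1$. The natural approach is to manipulate $F(S,c)$ using the two properties in Proposition~\ref{prop:CT-properties-of-F}. Write $c = qS + (c \bmod S)$ for the appropriate quotient $q$; by property (2), $F(S, c) = F(S, c \bmod S) + q$. The difficulty is that the corollary's left-hand side starts the chain at $c$, not at $S$, so the two arguments are swapped relative to each other, and I need a relation that interchanges the roles of the two arguments while adjusting the index count by exactly one. The key computational fact I would establish is that running the Euclidean chain on $(S, c)$ and on $(c, (-S)\bmod c)$ produces, after the first step, the same descending sequence of residues, with the chain for $(S,c)$ carrying exactly one extra leading term.

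Concretely, I would argue as follows. Starting the Euclidean chain at $x_1 = S$, $x_2 = c$, the next term is $x_3 = (-S) \bmod c$ (using $S < c$, which holds because each $a_i < c$ forces $a_i \le c/2$ when $s \ge 2$, or more carefully because $\gcd$ and divisibility force $S < c$ in the relevant range—this is the point I would check against the hypotheses). Thus the chain for $(S,c)$ is $S, c, (-S)\bmod c, \dots$, whereas the chain defining $p$ is $c, (-S)\bmod c, \dots$, i.e. the same sequence from the second term onward. Since $F$ counts the number of nonzero terms, and $S$ and $c$ are both nonzero, the chain for $(S,c)$ has exactly one more nonzero term than the chain for $(c, (-S)\bmod c)$, giving $F(S,c) = F\!\left(c, (-S)\bmod c\right) + 1$, i.e. $p = F(S,c) - 1$, as desired.

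The main obstacle I anticipate is the bookkeeping around the hypothesis $S < c$ and the edge cases where some $x_i \in \{0,1\}$ truncates the chain early; I would need to verify that the case distinctions in the definition of the Euclidean chain (the clause handling $x_i = 0, 1$) behave compatibly under the shift, so that "one extra leading term" is exactly right and not off by one in a degenerate situation. A clean way to sidestep raw residue arithmetic would be to invoke Proposition~\ref{prop:CT-properties-of-F} symbolically: since $(-S) \bmod c \equiv -S \pmod c$, I have $F(c, (-S)\bmod c) = F(c, -S \bmod c)$, and I would then relate this to $F(S,c)$ via the observation that one step of the chain from $(S,c)$ lands exactly at $(c, (-S)\bmod c)$ up to reindexing. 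Verifying that this single reindexing step accounts for precisely the $-1$, uniformly across all admissible tuples, is the crux, but it follows directly from the definition of $F$ as the count of positive terms once the sequence alignment is established.
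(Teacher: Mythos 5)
Your proposal is correct and takes essentially the same route as the paper, which likewise observes that by construction $p=F\bigl(c,(-\sum_{j=1}^s a_j)\bmod c\bigr)$ and then asserts this equals $F\bigl(\sum_{j=1}^s a_j,c\bigr)-1$; you additionally supply the chain-alignment justification (the chain started at $(\sum a_j,\,c)$ is the chain started at $(c,\,(-\sum a_j)\bmod c)$ with one extra nonzero leading term) that the paper leaves implicit. The one point to correct is the hypothesis $\sum_{j=1}^s a_j < c$ that you flag as needing verification: it can fail (in Example~\ref{ex:CT-non-unique-AS} one has $c=6$ and $\sum a_j=13$), but it is also not needed, since the recurrence $x_3=-x_1\bmod x_2$ holds by the definition of the Euclidean chain irrespective of whether $x_1<x_2$, so the alignment and the resulting off-by-one count go through unchanged.
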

\begin{proof}
Note that by the construction of the Euclidean chain $\{x_i\}$ in Proposition~\ref{prop:CT-construction-as-unique}, the number of vertices that are constructed is 
\[F\left(c, -\left(\sum_{j=1}^s a_j\right)\mod c\right)= F\left(\left(\sum_{j=1}^s a_j\right),c\right) - 1.\qedhere\]
\end{proof}

Next, we show that having only assigned labels to the leaf vertices  is not enough to guarantee a unique smooth arithmetical structure. 
\begin{example}\label{ex:CT-non-unique-AS}
Let $c = 6$, and let $(a_1,a_2,a_3,a_4,a_5)=(2,2,3,3,3)$.
Using Corollary~\ref{cor:length of path} followed by Proposition~\ref{prop:CT-properties-of-F} we have  $p=F(13,6)-1=F(1,6)-1$.
Then by Corollary~\ref{cor:F with 1}, $F(1,6)=7$, so $p=6$. On the other hand, if $c=12$, then we have $(c,a_1, a_2, a_3, a_4, a_5) = (12,2, 2, 3, 3, 3)$. Using the same results, we have $p=F(13,12)-1$, which then utilizing the fact that $F(13,12)=F(1,12)$, results in $p=12$.
Moreover, the resulting arithmetical structures on $\CT{6}{5}$ and $\CT{12}{5}$, have the path labels $1$ through $6$, and $1$ through $12$, respectively, listed in increasing order. 
\end{example}

\begin{proposition}\label{prop:CT-when-is-d_p>1}
Let $p\geq1$. Any smooth arithmetical structure $(\textbf{d}, \textbf{r})$ on $\CT{p}{s}$ has $d_p > 1$ if and only if $r_p < \sum_{j=1}^s r_{\ell_j}$.
\end{proposition}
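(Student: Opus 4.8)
The plan is to read off the divisibility condition at the central vertex $v_p$ and translate the integer value $d_p$ into an inequality between $r_p$ and the leaf sum. The vertex $v_p$ has as neighbors $v_{p-1}$ (when $p\geq 2$) together with all of the leaves $v_{\ell_1},\dots,v_{\ell_s}$, so the defining equation $(\diag(\mathbf{d})-A)\mathbf{r}=\mathbf{0}$ gives at this vertex
\[ d_p\, r_p \;=\; r_{p-1}+\sum_{j=1}^s r_{\ell_j}, \]
where I adopt the convention $r_0=0$ so that the identity also covers the star case $p=1$ (this is the same convention used in the proof of Lemma~\ref{lem:CT-gcd-leaf-nodes}). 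First I would record that, since $(\mathbf{d},\mathbf{r})$ is smooth, Lemma~\ref{lem:CT-smooth-equivalency} (equivalently Lemma~\ref{lem:CT-increasing-r}) yields $r_1<r_2<\cdots<r_p$, and hence $r_{p-1}<r_p$; for $p=1$ this reads $r_0=0<r_1=r_p$, so the strict inequality $r_{p-1}<r_p$ holds in every case.

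For the forward direction, assume $d_p>1$. Since $d_p$ is a positive integer this means $d_p\geq 2$, so the displayed identity gives $\sum_{j=1}^s r_{\ell_j}=d_p r_p-r_{p-1}\geq 2r_p-r_{p-1}$. Using $r_{p-1}<r_p$ I then conclude $\sum_{j=1}^s r_{\ell_j}>2r_p-r_p=r_p$, which is exactly $r_p<\sum_{j=1}^s r_{\ell_j}$. For the converse, assume $r_p<\sum_{j=1}^s r_{\ell_j}$. Then, using only $r_{p-1}\geq 0$, the same identity gives $d_p r_p=r_{p-1}+\sum_{j=1}^s r_{\ell_j}\geq\sum_{j=1}^s r_{\ell_j}>r_p$, so $d_p r_p>r_p$ and therefore $d_p>1$.

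I do not expect any substantive obstacle: the whole argument is a one-line rearrangement of the divisibility equation at $v_p$ combined with the ordering $r_{p-1}<r_p$ supplied by smoothness. The only point requiring genuine care is the boundary case $p=1$ (the star graph $\mathcal{S}_{s}=\CT{1}{s}$), where the term $r_{p-1}$ is absent; handling this through the convention $r_0=0$ keeps a single uniform computation, and one should verify that the strict inequality $r_{p-1}<r_p$ degenerates correctly to $0<r_1$ there, which it does since all $r$-values are positive.
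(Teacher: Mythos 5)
Your proof is correct and follows essentially the same route as the paper: read off the relation $d_p r_p = r_{p-1} + \sum_{j=1}^s r_{\ell_j}$ at the central vertex and combine it with $r_{p-1} < r_p$ from smoothness in one direction and $r_{p-1} \geq 0$ in the other. Your uniform convention $r_0 = 0$ actually handles the $p=1$ case a bit more cleanly than the paper, which treats $p=1$ separately in the forward direction and asserts (unnecessarily, and not quite accurately) that $p>1$ in the converse.
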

\begin{proof}
Let $(\textbf{d}, \textbf{r})$ be a smooth arithmetical structure on $\CT{p}{s}$ with $d_p > 1$. If $p=1$, then \[\sum_{j=1}^s r_{\ell_j} =d_pr_p> r_p\ \mathrm{because}\  d_p > 1.\]
If $p > 1$, then \[d_pr_p = r_{p-1} + \sum_{j=1}^s r_{\ell_j}.\] Since the arithmetical structure is smooth, by Lemma~\ref{lem:CT-increasing-r}, 
we have $r_p > r_{p-1}$. Thus, \begin{equation*}
    d_pr_p = r_{p-1} + \sum_{j=1}^s r_{\ell_j} < r_p + \sum_{j=1}^s r_{\ell_j}.
\end{equation*} 
Subtracting $r_p$ from both sides and combining terms yields
\[\sum_{j=1}^s r_{\ell_j} > (d_p - 1)r_p>r_p.\] 

For the reverse direction, assume that $r_p < \sum_{j=1}^s r_{\ell_j}$.  Note that $p > 1$, because $r_p \neq \sum_{j=1}^s r_{\ell_j}$. 
Then
\[d_pr_p = r_{p-1} + \sum_{j=1}^s r_{\ell_j}>r_{p-1}+r_p>r_p,\]
which implies that $d_p>1.$
\end{proof}

Next, we give an alternative proof to Proposition 2.3 in Archer et al. \cite{archer_2020}, which stated that any smooth arithmetical structures on bidents, denoted as $\CT{p}{2}$, must have center vertex equal to~1.

\begin{corollary}
Let $p\geq 1$. There are no smooth arithmetical structures $(\mathbf{d}, \mathbf{r})$ on $\CT{p}{2}$ with $d_p > 1$.
\end{corollary}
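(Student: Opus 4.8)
The plan is to invoke Proposition~\ref{prop:CT-when-is-d_p>1}, which reduces the claim $d_p>1$ to the numerical inequality $r_p < \sum_{j=1}^s r_{\ell_j}$. Since here $s=2$, the sum on the right is simply $r_{\ell_1}+r_{\ell_2}$, so I want to show that for any smooth arithmetical structure on $\CT{p}{2}$ the inequality $r_p < r_{\ell_1}+r_{\ell_2}$ fails; equivalently, $r_p \geq r_{\ell_1}+r_{\ell_2}$ always holds, and hence $d_p$ cannot exceed $1$.

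The key inputs are Lemma~\ref{lem:CT-smooth-equivalency} and Lemma~\ref{lem:CT-gcd-leaf-nodes}. First I would use smoothness, via Lemma~\ref{lem:CT-smooth-equivalency}, to record that each leaf satisfies $r_{\ell_j}\mid r_p$ with $r_{\ell_j}<r_p$, so that $d_{\ell_j}=r_p/r_{\ell_j}\geq 2$ for $j=1,2$. Next, Lemma~\ref{lem:CT-gcd-leaf-nodes} gives $\gcd(r_{\ell_1},r_{\ell_2})=1$. Writing $r_{\ell_1}=r_p/d_{\ell_1}$ and $r_{\ell_2}=r_p/d_{\ell_2}$, the coprimality of $r_{\ell_1}$ and $r_{\ell_2}$ together with both dividing $r_p$ forces $r_{\ell_1}r_{\ell_2}\mid r_p$. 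From $r_{\ell_1}r_{\ell_2}\mid r_p$ I would then derive the bound $r_{\ell_1}+r_{\ell_2}\leq r_{\ell_1}r_{\ell_2}\leq r_p$, where the first inequality uses $r_{\ell_1},r_{\ell_2}\geq 2$ (a short check: $(r_{\ell_1}-1)(r_{\ell_2}-1)\geq 1$ rearranges to $r_{\ell_1}r_{\ell_2}\geq r_{\ell_1}+r_{\ell_2}$ when both factors are at least $2$). This yields $r_p\geq r_{\ell_1}+r_{\ell_2}$, contradicting the hypothesis needed for $d_p>1$.

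The step I expect to be the main obstacle, though modest, is justifying $r_{\ell_1}r_{\ell_2}\mid r_p$: one must confirm that two coprime divisors of $r_p$ have product dividing $r_p$, which is standard but worth stating carefully. The only edge case is whether either leaf label could equal $1$; that is fine, since the argument only requires $r_{\ell_j}\geq 2$, and this is precisely guaranteed because smoothness forces $d_{\ell_j}\geq 2$, hence $r_{\ell_j}=r_p/d_{\ell_j}$ could in principle be $1$. If some $r_{\ell_j}=1$, then $r_{\ell_1}+r_{\ell_2}\leq 1+r_{\ell_2}\leq r_p$ still holds directly from $r_{\ell_2}<r_p$ (so $r_{\ell_2}\leq r_p-1$), so I would handle that case separately rather than through the product bound. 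In every case the conclusion $r_p\geq r_{\ell_1}+r_{\ell_2}$ holds, so by Proposition~\ref{prop:CT-when-is-d_p>1} there is no smooth arithmetical structure on $\CT{p}{2}$ with $d_p>1$.
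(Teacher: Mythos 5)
Your proof is correct, and its skeleton matches the paper's: both reduce the claim via Proposition~\ref{prop:CT-when-is-d_p>1} to showing $r_{\ell_1}+r_{\ell_2}\leq r_p$. Where you diverge is in how that inequality is established. The paper's proof is a one-liner resting on the elementary fact that a proper divisor of $r_p$ is at most $r_p/2$, so two proper divisors sum to at most $r_p$; it never invokes Lemma~\ref{lem:CT-gcd-leaf-nodes}. You instead bring in the coprimality $\gcd(r_{\ell_1},r_{\ell_2})=1$ to get $r_{\ell_1}r_{\ell_2}\mid r_p$ and then bound $r_{\ell_1}+r_{\ell_2}\leq r_{\ell_1}r_{\ell_2}\leq r_p$, with a separate case when a leaf label equals $1$. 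This works, but it is heavier than necessary and forces the case split that the paper's route avoids entirely; on the other hand, your argument shows the stronger conclusion $r_{\ell_1}r_{\ell_2}\leq r_p$ when both labels exceed $1$, which the divisor-size bound does not give. One small writing issue: the sentence claiming $r_{\ell_j}\geq 2$ is ``precisely guaranteed'' by smoothness is wrong as stated (smoothness gives $d_{\ell_j}\geq 2$, not $r_{\ell_j}\geq 2$), and it contradicts the very next clause; since you do handle $r_{\ell_j}=1$ correctly afterward, this is a slip in exposition rather than a gap, but it should be cleaned up.
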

\begin{proof}
Let $(\mathbf{d},\mathbf{r})$ be a smooth arithmetical structure on $\CT{p}{2}$. Then $r_{\ell_1}$ and $r_{\ell_2}$ are proper divisors of $r_p$. By Proposition~\ref{prop:CT-when-is-d_p>1}, $d_p>1$ if and only if $r_{\ell_1}+r_{\ell_2}>r_p$. So we need $r_{\ell_1}+r_{\ell_2}>r_p$. However, since $r_{\ell_1}$ and $r_{\ell_2}$ are proper divisors of $r_p$, we have $r_{\ell_1}+r_{\ell_2}\leq r_p$. 
Therefore, $d_p$ can not be greater~than~1.
\end{proof}

Next, given the numbers assigned to leaf vertices, we count the number of smooth arithmetical structures that have a label greater than one at the central vertex.

\begin{proposition}\label{prop:lcm}
Let $(r_{\ell_1}, \dots, r_{\ell_s}) \in \N^{s}$, let $S = \{n \in \N\mid n \cdot \lcm(r_{\ell_1}, \dots, r_{\ell_s}) < \sum_{j=1}^s r_{\ell_j}\}$.
 Then, given $(r_{\ell_1},\ldots,r_{\ell_s})$, the number of smooth arithmetical structures that can be constructed  such that the central vertex has label greater than 1 is is given by 
\begin{equation*}
|S|=
                       \left\lfloor \frac{\sum_{j=1}^s r_{\ell_j}}{\lcm(r_{\ell_1}, \dots, r_{\ell_s})} \right\rfloor.
\end{equation*}
\end{proposition}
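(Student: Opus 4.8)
The plan is to count smooth arithmetical structures with central label $c > 1$ by using the constructive bijection from Proposition~\ref{prop:CT-construction-as-unique}, which says that once the leaf labels $(r_{\ell_1},\dots,r_{\ell_s})$ and the central value $c$ are fixed (subject to the divisibility and gcd hypotheses there), the smooth arithmetical structure is uniquely determined. So I would recast the problem: I am really counting the admissible values of $c$ that produce a structure with $d_p > 1$. By Proposition~\ref{prop:CT-when-is-d_p>1}, the condition $d_p > 1$ is equivalent to $r_p = c < \sum_{j=1}^s r_{\ell_j}$. Thus the count reduces to counting valid central values $c$ satisfying $c < \sum_{j=1}^s r_{\ell_j}$.

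The next step is to pin down exactly which values of $c$ are admissible. For the construction to produce a valid smooth structure with the given leaves, Proposition~\ref{prop:CT-construction-as-unique} requires $a_i = r_{\ell_i} \mid c$ for every $i$ and $\gcd(c, r_{\ell_1},\dots,r_{\ell_s}) = 1$. The simultaneous divisibility condition $r_{\ell_i} \mid c$ for all $i$ is equivalent to $\lcm(r_{\ell_1},\dots,r_{\ell_s}) \mid c$, so I would write $c = n \cdot \lcm(r_{\ell_1},\dots,r_{\ell_s})$ for a positive integer $n$. I would then observe that the gcd condition is automatic: any common divisor of $c$ and all the $r_{\ell_j}$ divides $\gcd(r_{\ell_1},\dots,r_{\ell_s})$, and by Lemma~\ref{lem:CT-gcd-leaf-nodes} a smooth structure forces $\gcd(r_{\ell_1},\dots,r_{\ell_s}) = 1$, so no nontrivial common factor can arise. (Alternatively, if one prefers not to invoke that lemma, the gcd requirement in Proposition~\ref{prop:CT-construction-as-unique} is precisely the statement that $\gcd$ of the leaves is $1$, which is the standing assumption for the constructed structure to be a genuine arithmetical structure.) Hence the admissible central values are exactly the multiples $n \cdot \lcm(r_{\ell_1},\dots,r_{\ell_s})$.

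Combining these two reductions, the set $S$ in the statement is exactly the set of positive integers $n$ for which $c = n \cdot \lcm(r_{\ell_1},\dots,r_{\ell_s})$ is admissible and yields $d_p > 1$, i.e.
\[
S = \left\{ n \in \N \;\middle|\; n \cdot \lcm(r_{\ell_1},\dots,r_{\ell_s}) < \sum_{j=1}^s r_{\ell_j} \right\},
\]
with each such $n$ giving, via Proposition~\ref{prop:CT-construction-as-unique}, a \emph{distinct} smooth structure (distinct $c$ values force distinct central labels, hence distinct structures). It only remains to evaluate $|S|$. Writing $L = \lcm(r_{\ell_1},\dots,r_{\ell_s})$ and $T = \sum_{j=1}^s r_{\ell_j}$, the set $S$ consists of the positive integers $n$ with $nL < T$, that is $n < T/L$, so $|S| = \lfloor (T-1)/L \rfloor$ if one insists on the strict inequality; I would then check the boundary case that $L \mid T$ cannot occur here, since $r_{\ell_j} \mid L$ for each $j$ gives $T = \sum r_{\ell_j} \leq sL$ but more to the point the strict-versus-nonstrict distinction is handled by noting $n = T/L$ is excluded precisely because it corresponds to $d_p = 1$ rather than $d_p > 1$, so the floor of $T/L$ counts exactly the $n$ with $nL < T$. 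This yields $|S| = \lfloor T/L \rfloor = \big\lfloor \sum_{j=1}^s r_{\ell_j} \big/ \lcm(r_{\ell_1},\dots,r_{\ell_s}) \big\rfloor$.

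The main obstacle I anticipate is the off-by-one boundary question: whether $n = T/L$ (when it happens to be an integer) should be counted. The cleanest way to dispose of it is to argue that if $nL = T$ exactly, then $c = T = \sum_{j=1}^s r_{\ell_j}$, which gives $d_p = 1$ by Proposition~\ref{prop:CT-when-is-d_p>1} and so is excluded from the ``$d_p > 1$'' count; thus whether or not $L \mid T$, the number of valid $n$ equals $\lfloor T/L \rfloor$. I would verify this matches the asserted closed form, and double-check against Example~\ref{ex:CT-non-unique-AS}, where the two values $c = 6$ and $c = 12$ for leaf labels $(2,2,3,3,3)$ give $T = 13$, $L = 6$, and indeed $\lfloor 13/6 \rfloor = 2$, consistent with two admissible central labels $6$ and $12$.
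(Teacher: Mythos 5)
Your proposal is correct and follows essentially the same route as the paper: both reduce to counting admissible central values $c$ via the uniqueness statement of Proposition~\ref{prop:CT-construction-as-unique}, characterize $d_p>1$ through Proposition~\ref{prop:CT-when-is-d_p>1} as $c<\sum_{j=1}^s r_{\ell_j}$, and observe that the simultaneous divisibility forces $c$ to be a multiple of the lcm, giving $\lfloor \sum r_{\ell_j}/\lcm\rfloor$ choices. Your treatment of the gcd hypothesis and of the boundary case $L\mid T$ is, if anything, slightly more explicit than the paper's.
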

\begin{proof}
First, consider when $|S| = 0$, which implies $\lcm(r_{\ell_1}, \dots, r_{\ell_s}) \geq \sum_{j=1}^s r_{\ell_j}$. If there is a structure $(\mathbf{d},\mathbf{r})\in\CT{p}{s}$ for some $p\in \N$, then since each $r_{\ell_j}$ divides $r_p$, we have $\lcm(r_{\ell_1}, \dots, r_{\ell_s}) \mid r_p$. Thus, $r_p \geq \lcm(r_{\ell_1}, \dots, r_{\ell_s}) \geq \sum_{j=1}^s r_{\ell_j}$. This contradicts Proposition~\ref{prop:CT-when-is-d_p>1}, hence there exists no smooth arithmetical structures that can be constructed such that $d_p > 1$.

Now, consider when $|S| \geq 1$. Let $r_{p_i}=i\cdot \lcm (r_{\ell_1}, \dots, r_{\ell_s})$ for $i=1,2,\ldots, |S|.$ By Proposition \ref{prop:CT-construction-as-unique}, we can construct $|S|$ smooth arithmetical structures, one per each value of $r_{p_i}$ assigned to the central vertex. By Proposition~\ref{prop:CT-when-is-d_p>1}, each of these structures have $d_{p_i}>1$. Moreover, since in any structure of the form $(r_1,r_2,\ldots, r_p,r_{\ell_1},r_{\ell_2},\ldots, r_{\ell_s})$ we must have $r_p\mid\lcm(r_{\ell_1}, \dots, r_{\ell_s})$ and the only values of $r_p$ that satisfy that $r_p<\sum_{j=1}^s r_{\ell_j}$ are $r_{p_i}$ for $i=1,2,\ldots, |S|$, then by Proposition~\ref{prop:CT-when-is-d_p>1} these are the only possible smooth structures with $d_p>1$.
To complete the proof, note that by the definition of $S$, $|S|$ is the largest integer $m$ such that $m < \frac{\sum_{j=1}^s r_{\ell_j}}{\lcm(r_{\ell_1}, r_{\ell_s})}$. Hence, $|S| = \left\lfloor \frac{\sum_{j=1}^s r_{\ell_j}}{\lcm(r_{\ell_1}, \dots, r_{\ell_s})} \right\rfloor$.
\end{proof}

We conclude with an application of Proposition~\ref{prop:lcm}.
\begin{example}
Recalling Example~\ref{ex:CT-non-unique-AS}, note that $\lcm(r_{\ell_1}, \dots, r_{\ell_5}) = 6$ and $\sum_{j=1}^5 r_{\ell_j} = 13$. 
Thus, the number of smooth arithmetical structures with label at the center vertex greater than 1 is  $\lfloor \frac{13}{6} \rfloor = 2$, which corresponds to when $r_{p_1} = 6$ and $r_{p_2} = 12$. These are precisely the two structures presented in Example~\ref{ex:CT-non-unique-AS}, namely, 
\[ \textbf{r}=(1,2,3,4,5,6,2,2,3,3,3) \text { and } \textbf{r}=(1,2,3,4,5,6,7,8,9,10,11,12,2,2,3,3,3).\]
\end{example}

\section{Future Work}\label{sec:futurework}
We conclude with some direction for future research. To begin we recall that Archer et al.~\cite{archer_2020} establish that the number of smooth arithmetical structures on a bident is bounded by a cubic polynomial. We ask the following:
\begin{question}
    Does there exist a polynomial bound for the number of smooth arithmetical structures on coconut trees $\CT{p}{s}$ for $s>2$?
\end{question}
\begin{question}
    If a polynomial bound for the number of smooth arithmetical structures on coconut trees $\CT{p}{s}$ for $s>2$  exists, how does it relate to $s$ the number of leafs?
\end{question}
We now pose a number theoretic question which would help in enumerating smooth arithmetical structures on $\CT{p}{s}$ if we are given $r_p$ and the sum of the leaves.
\begin{question}\label{q:number theory}
    Given integers $r_p$ and $\sum_{j=1}^s r_{\ell_j}$, in how many ways can we partition 
    $\sum_{j=1}^s r_{\ell_j}$ using the proper divisors of $r_p$?
\end{question}

\bibliographystyle{plain}
\bibliography{Bibliography.bib}

\end{document}